\begin{document}
\title
{Cubical cospans and higher cobordisms \\
(Cospans in Algebraic Topology, III)}

\author {Marco Grandis}
\email{grandis@dima.unige.it}

\address{Dipartimento di Matematica
\\ Universit\`{a} di Genova
\\Via Dodecaneso 35
\\16146-Genova, Italy }

\thanks{Work supported by a research grant of Universit\`a di Genova.}

\keywords{spans, cospans, weak double category, cubical sets, weak cubical category, homotopy pushout, cobordisms}

\classification{18D05, 55U10, 55P05, 57N70}

\received{}
\revised{}
\accepted{}

\newtheorem{thm}{Theorem}
\newtheorem{prop}{Proposition}
\newtheorem{definition}{Definition}
\newtheorem{defthm}{Definition and Theorem}
\newtheorem{defprop}{Definition and Proposition}
\newtheorem{lemma}{Lemma}
\newtheorem{cor}{Corollary}

\def \c {\colon}
\def \cc {\, \colon \!}
\def \q {\qquad}
\def \qq {\qquad \qquad}
\def \qqq {\qquad \qquad \qquad \qquad}
\def \bu {{\scriptscriptstyle\bullet}}
\def \Bu {~ {\scriptscriptstyle\bullet} ~}
\def \ndt {\noindent}
\def \Ndt {\smallskip  \noindent}
\def \sst  {\scriptstyle}

\def\@url#1{\texttt{#1}}
{\catcode`\/=\active\gdef/{\char47\penalty-100 }\catcode`\.=\active
\gdef.{\char46\penalty-100 }}

\def\url{\catcode`\/=\active \catcode`\.=\active\catcode`\~=12 \@url}

\def \lw {\, \leftarrow \,}
\def \dw {\downarrow}
\def \rrw {\, \rightrightarrows \,}
\def \rlw {\, \rightleftarrows \,}

\def \lrl {\;\; \raisebox{0.9ex}{$\longleftarrow$} \hspace{-3.7ex} 
 \raisebox{0.2ex}{$\longrightarrow$}
 \hspace{-3.7ex} \raisebox{-0.5ex}{$\longleftarrow$} \;\;}
 
\def \rlr { \;\; \raisebox{0.9ex}{$\longrightarrow$} \hspace{-3.7ex} 
 \raisebox{0.2ex}{$\longleftarrow$}
 \hspace{-3.7ex} \raisebox{-0.5ex}{$\longrightarrow$} \;\;}
 
\def \uw {\; \uparrow \!\!}
\def \todot {\; \cdot \hspace{-2.4ex} \to}

\def \ti {\! \times \!}
\def \te {\otimes}
\def \sdp {\rtimes}
\def \ci {\!\mathbin{\raise.3ex\hbox{$\scriptscriptstyle\circ$}}\!}
\def\le{\leqslant}
\def\ge{\geqslant}
\def \Sum{\raise.35ex\hbox{$\scriptstyle{\sum}$} \, }
\def \iso {\: \cong \:}
\def \eq {\! \sim \!}
\def \U  {{\cal U}}
\def \P  {{\cal P}}

\def \id {{\rm id}}
\def \Im {{\rm Im}}
\def \op{^{{\rm op}}}

\def \al {\alpha}
\def \be {\beta}
\def \ga {\gamma}
\def \de {\delta}
\def \ep {\epsilon}
\def \th {\vartheta}
\def \io {\iota}
\def \ka {\kappa}
\def \la {\lambda}
\def \si {\sigma}
\def \ph {\varphi}
\def \om {\omega}
\def \bal {{\bm \alpha}}
\def \bsi {{\bm \sigma}}

\def \A {{\bf A}}
\def \B {{\bf B}}
\def \X {{\bf X}}
\def \Y {{\bf Y}}
\def \Set {{\bf Set}}
\def \Top {{\bf Top}}
\def \Cat {{\bf Cat}}
\def \ptCat {{\rm pt}{\bf Cat}}
\def \cCat {{\rm c}{\bf Cat}}
\def \Ab {{\bf Ab}}
\def \Cub {{\rm Cub}}
\def \pCub {{\rm pCub}}
\def \CUB {{\bf Cub}}
\def \nCUB {n{\bf Cub}}
\def \Cosp {{\rm Cosp}}

\def \bo {{\bf 1}}
\def \bt {{\bf 2}}
\def \s {{\bf s}}
\def \t {{\bf t}}
\def \We {{\bm \wedge}}
\def \Ve {{\bm \vee}}
\def \E {{\bf E}}

\def \AA {\mathbb{A}}
\def \BB {\mathbb{B}}
\def \II {\mathbb{I}}
\def \MM {\mathbb{M}}
\def \KK {\mathbb{K}}
\def \EE {\mathbb{E}}
\def \CCosp {\mathbb{C}{\rm osp}}
\def \CCblc {\mathbb{C}{\rm blc}}
\def \CCc {\mathbb{C}{\rm c}}
\def \pCCc {{\rm p}\mathbb{C}{\rm c}}
\def \CCOSP {\mathbb{C}{\rm OSP}}
\def \CCOB {\mathbb{C}{\rm OB}}
\def \CCob {\mathbb{C}{\rm ob}}
\def \CCub {\mathbb{C}{\rm ub}}
\def \RRel {\mathbb{R}{\rm el}}
\def \SSp {\mathbb{S}{\rm p}}

\def \es  {\emptyset}
\def \sing  {\{*\}}
\def \N {{\bf N}}
\def \Z {{\bf Z}}
\def \I {{\bf I}}
\def \R {{\bf R}}

\def \dd {\partial}
\def \ddp {\partial^+}
\def \ddm {\partial^-}
\def \dda {\partial^\alpha}
\def \ddb {\partial^\beta}

\def \um {u^-}
\def \up {u^+}
\def \Um {U^-}
\def \Up {U^+}
\def \Xm {X^-}
\def \Xz {X^0}
\def \Xp {X^+}
\def \vm {v^-}
\def \vp {v^+}
\def \Vm {V^-}
\def \Vp {V^+}
\def \Ym {Y^-}
\def \Yz {Y^0}
\def \Yp {Y^+}
\def \fm {f^-}
\def \fz {f^0}
\def \fp {f^+}
\def \dm {d^-}
\def \dpp {d^+}
\def \jm {j^-}
\def \jp {j^+}
\def \hm {h^-}
\def \hp {h^+}

\begin{abstract}
After two papers on weak cubical categories and {\it collarable} cospans, respectively, we put 
things together and construct a {\it weak} cubical category of cubical {\it collared} cospans of 
topological spaces. We also build a second structure, called a {\it quasi} cubical category, formed of 
arbitrary cubical cospans concatenated by homotopy pushouts. This structure, simpler but weaker, has 
{\it lax} identities. It contains a similar framework for cobordisms of manifolds with corners and could 
therefore be the basis to extend the study of TQFT's of Part II to higher cubical degree.

\end{abstract}

\maketitle

\section*{Introduction}\label{Intro}

	This is a sequel to two papers, cited as Part I \cite{G2} and Part II \cite{G3}. A reference I.2, or 
I.2.3, relates to Section 2 or Subsection 2.3 of Part I. Similarly for Part II.

	In Part I we constructed a cubical structure of higher cospans $ \CCosp_*(\X) $ on a category $ \X 
$ with pushouts, and abstracted from the construction the general notion of a weak cubical category.

	An $n$-cubical cospan in $\X $ is defined as a functor $ u \c \We^n \to \X$, where $ \We $ is the 
category
%
    \begin{equation}
    \We :	\qq	-1  \to  0  \lw  1  \qq   \text{(the  {\it formal cospan}).}
    \label{0.1}  \end{equation}

	These diagrams form a cubical set, equipped with compositions $ u +_i v $ of $i$-consecutive 
$n$-cubes, for $ i = 1,..., n. $ Such cubical compositions are computed by pushouts, and behave 
`categorically' in a weak sense, up to suitable comparisons.

	To make room for the latter, the $n$-th component of $ \CCosp_*(\X) $
%
    \begin{equation}
    \CCosp_n(\X)  =  \Cat(\We^n, \X),
    \label{0.2}  \end{equation}
is not just the {\it set} of functors 
$ u \c \We^n \to \X $ (the {\it $n$-cubes} of the structure), but the {\it 
category} of such functors and their natural transformations 
$ f \c u \to u' \c \We^n \to \X $ (the {\it $n$-maps} of the structure). 
The comparisons are {\it invertible} $n$-maps; but general $n$-maps are 
also important, e.g. to define limits and colimits (I.4.6, II.1.3). Thus, a {\it weak cubical category} has 
countably many {\it weak} (or {\it cubical}) directions $ i = 1, 2,..., n,... $ all of the same sort, and one 
{\it strict} (or {\it transversal}) direction, which is generally of a different sort.

	Truncating cubes and transversal maps at cubical degree 1, we get the {\it weak double category} $ 
\CCosp(\X) = 1\CCosp_*(\X), $ consisting of cospans and their natural transformations, with one strict 
composition (the transversal one) and one weak composition (by pushouts). Truncating at cubical 
degree 2, we get a structure related to Morton's construction for 2-cubical cospans (cf. \cite{Mo} and 
\ref{1.7}).

	Now, the weak cubical category $ \CCosp_*(\Top) $ of cubical cospans of topological spaces is 
not well suited for Algebraic Topology. Indeed, the composition by ordinary pushouts is not 
homotopically stable, and is not preserved by (co)homotopy or (co)homology functors, even in a weak 
sense.
	
	This is why, in Part II, {\it working in cubical degree 1}, we considered {\it collarable cospans}, 
forming a weak double category $ \CCblc(\Top) \subset \CCosp(\Top). $ Indeed, a push-out of 
collarable maps is a {\it homotopy pushout} (Thm. II.2.5). Therefore, cohomotopy functors induce 
`functors' from collarable cospans to spans of sets, providing - by linearisation - topological quantum 
field theories (TQFT) on manifolds and their cobordisms (II.3). Similarly, (co)homology and 
homotopy functors take collarable cospans to relations of abelian groups or (co)spans of groups, 
yielding other algebraic invariants (II.4).

	Notice that, as motivated in II.1.6, the definition of a collarable cospan is more general than one 
might expect. Indeed, we want to include the degenerate cospan $ e_1(X) = (\id X, \id X), $ instead of 
replacing it with the {\it cylindrical degeneracy} $ E_1(X), $ as usually done for cobordisms:
%
    \begin{equation}
E_1(X)  =  (\dm \c X \to X\ti [0, 1] \lw X  \cc \dpp),	\;\;\;\;
\dm(x)  =  (x, 0),  \;\;    \dpp(x)  =  (x, 1).
    \label{0.3}  \end{equation}

	The main problem is that such degeneracies do {\it not} satisfy an axiom of cubical sets: we only get 
$ E_1E_1(X) \iso E_2E_1(X) $ (see \ref{5.2}).

	Therefore, according to our definition in Part II, {\it a collarable cospan decomposes into a sum of 
a trivially collarable part} (a pair of homeomorphisms) {\it and a 1-collarable part}; only the second 
does admit collars (which are embeddings of cylinders, with disjoint images). However, $ e_1(X) $ and 
$ E_1(X) $ are {\it weakly equivalent}, as defined in II.2.8 (and here, in \ref{5.3}); {\it homotopy 
invariance} of functors on topological cospans is defined with respect to this notion.

	In the present paper, we combine the previous Parts. The first main construction, in Sections 2-4, is 
a weak cubical category $ \CCc_*(\Top) $ of {\it collared cospans}. It extends to unbounded cubical 
degree a weak double category $ \CCc(\Top), $ which is a variant of $ \CCblc(\Top) $ where collars 
are {\it assigned}. 
	
	The second main framework, in Section 5, is simpler, but satisfies weaker axioms on degeneracies 
and requires more comparisons. We work now with {\it arbitrary} cospans (not supposed to have 
collars) and replace:

- the ordinary degeneracies $ e_i $ with the cylindrical ones, $ E_i$,

- the ordinary concatenations with the {\it cylindrical} ones, constructed by means of homotopy 
pushouts.

	Notice that the latter is, in itself, a homotopy-invariant operation, which explains why {\it here} 
collars are not needed. As a price for this simplicity, we obtain a symmetric {\it quasi} cubical category 
$ \CCOSP_*(\Top), $ where degeneracies only satisfy the cubical relation mentioned above {\it up to 
isomorphism}, and behave as {\it lax} identities: the (left and right) unit comparisons are not invertible. 
An extensive study of motivations and goals of the homotopical weakening of identities can be found 
in J. Kock \cite{Ko}; see also Joyal-Kock \cite{JK} (and other papers in preparation, by the same authors).

	This new structure is made precise at the end, in Section 7, while in Section 6 we construct the {\it 
quasi} cubical category $ \CCOB_*(k) \subset \CCOSP_*(\Top) $ of $k$-manifolds and their 
cubical cobordisms, based on the notion of {\it manifolds with corners} \cite{Do, La, Ja}. Extending 
the cohomotopy functors to these structures (after II.3) should yield higher TQFTs, but this is not 
dealt with here. The 2-cubical truncation of our construction is related with the construction of Morton 
and Baez \cite{Mo, Ba} (which use cylindrical identities and pushout-concatenation). 

	As discussed in \ref{6.1}, we do not endeavour to construct a {\it weak} cubical category $ 
\CCob_*(k)$ $ \subset \CCc_*(\Top) $ based on the first main construction. It should be possible, but 
so heavy that the goal of getting a better behaviour of degeneracies might not justify its complication; 
moreover, considering the importance of `units-up-to-homotopy' in modelling homotopy types 
\cite{Ko, JK}, one may question the interest of this goal. (Notice also that in Part II we defined a weak 
double subcategory $ \CCob(k) \subset \CCblc(\Top) $ of {\it $k$-dimensional manifolds and their 
cobordisms}, based on the fact that `cobordisms are always collarable'.)

	Size problems can be dealt with as in Part I, using a hierarchy of two universes, $ \U_0 \in  \U$. 
Small category means $\U$-small. The constructions $ \CCosp_*(-)$, $ \CCc_*(-)$, etc. apply to the 
small category $ \Top $ of $\U_0$-small spaces. $ \Cat $ is the 2-category of $\U$-small categories, 
to which $ \Top $ belongs. Finally, the index $ \al  $ takes values $ \pm 1, $ written $ \pm $ in 
superscripts, and $ IX $ denotes the standard cylinder $ X \ti  [0, 1] $ on a space $ X$.

\section{The symmetric weak cubical category of cospans}\label{1}

	We begin by recalling cubical cospans, from Part I; we also introduce symmetric {\it quasi} cubical 
sets, which will be used later to define quasi cubical categories.

\subsection{Symmetric cubical and quasi cubical sets}\label{1.1} A cubical set 
$ ((A_n), (\dda_i)), (e_i)) $ has {\it faces} $ (\dda_i) $ and {\it degeneracies} $ (e_i)$
%
    \begin{equation}
\dda_i \c A_n     \rlw        A_{n-1}  \cc e_i  	\qq	 (i = 1,..., n;  \; \al  = \pm),
    \label{1.1.1}  \end{equation}
satisfying the cubical relations:
%
    \begin{equation}
\dda_i.\ddb_j  =  \ddb_j.\dda_{i+1}  \qq    (j \le i),
    \label{1.1.2}  \end{equation}
%
%
    \begin{equation}
e_j.e_i  =  e_{i+1}.e_j   \qq	(j \le i),
    \label{1.1.3}  \end{equation}
%
%
    \begin{equation}
\dda_i.e_j  =  e_j.\dda_{i-1}  \;\;  (j < i), \q	{\it or}   \;\;   \id   \;\;   (j = i),  \q
{\it or}   \;\;   e_{j-1}.\dda_i   \;\;  (j > i).
    \label{1.1.4}  \end{equation}

	As in I.2.2, a {\it symmetric} cubical set is a cubical set which is further equipped with {\it 
transpositions}
%
    \begin{equation}
	s_i \c A_n \to A_n	\qq	(i = 1,..., n-1),
    \label{1.1.5}  \end{equation}
satisfying the Moore relations (see Coxeter-Moser \cite{CM}, 6.2; or Johnson \cite{Jo}, Section 5, 
Thm. 3)
%
    \begin{equation}
s_i.s_i  =  1,   \q
s_i.s_ j.s_i  =  s_ j.s_i.s_ j  \;\;  (i = j-1),
 \q   s_i.s_ j  =  s_ j.s_i  \;\;   (i < j-1),
    \label{1.1.6}  \end{equation}
 and the following equations:
%
    \begin{equation}
    \begin{array}{cccccccccc}
\qq   &\;\;&      j < i  &\q&      j = i    &\q&       j = i+1  &\q&      j > i+1
\\[5pt]
\dda_j.s_i   \;\; =   &&   s_{i-1}.\dda_j   &&   \dda_{i+1}   &&   \dda_i   &&   s_i.\dda_j
\\[4pt]
s_ i.e_ j	\;\;  =  	 &&   e_ j.s_ {i-1}   &&   e_ {i+1}   &&   e_ i   &&   e_ j.s_ i.
    \label{1.1.7}\end{array}
    \end{equation}

		We will speak of a {\it symmetric quasi cubical set} when the axiom (\ref{1.1.3}), on pure degeneracies, is 
omitted, which will be important for `cylindrical degeneracies' (cf. \ref{5.2}).

	Actually, the presence of transpositions makes all faces and degeneracies determined by the ones 
belonging to a fixed direction, e.g. the 1-directed ones, $ \dda_1 $ and $ e_1. $ In fact, from $ 
\dda_{i+1} = \dda_i.s_i $ and $ e_{i+1} = s_i.e_i, $ we deduce that:
%
    \begin{equation}
\dda_i  =  \dda_1.\s_i ',	\q   e_i  =  \s_i.e_1	\qq    (i = 2,..., n;  \;  \al  = \pm ),
    \label{1.1.8}  \end{equation}
where we are using the inverse `permutations':
%
    \begin{equation}
\s_i  =  s_{i-1}. ... .s_1,	\qq	\s_i '  =  s_1. ... .s_{i-1}.
    \label{1.1.9}  \end{equation}

\subsection{A reduced presentation}\label{1.2} 
These relations lead to a more economical presentation of our structures.

\begin{prop}
A symmetric {\it quasi} cubical set can be equivalently defined as a system
%
    \begin{equation}
    \begin{array}{l}
A  =  ((A_n), ( \dda_1), (e_1), (s_i)),
\\[4pt]
 \dda_1  \c A_n   \rlw  A_{n-1}  \cc e_1,	\q  
 s_1 \c A_{n+1} \to A_{n+1}   \qq	(n \ge 1),
    \label{1.2.1}\end{array}
    \end{equation}
under the Moore relations (\ref{1.1.6}) and the axioms:
%
    \begin{equation}
    \begin{array}{ll}
\dda_1.\ddb_1  =  \ddb_1.\dd_1^\al.s_1,  \qq   &	\dda_1.e_1  =  \id ,
\\[3pt]
s_i.\dda_1  =  \dda_1.s_{i+1},  &    e_1.s_i  =  s_{i+1}.e_1.
    \label{1.2.2}\end{array}
    \end{equation}
	For a symmetric cubical set one adds the axiom:
%
    \begin{equation}
e_1e_1 = s_1.e_1e_1   \qq 	\text{({\it symmetry of second-order degeneracies}).}
    \label{1.2.3}  \end{equation}
\end{prop}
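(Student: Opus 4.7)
The plan is to prove the two descriptions are equivalent by reduction and reconstruction. The forward direction is routine: given a full symmetric (quasi) cubical set, the reduced data $((A_n),(\dda_1),(e_1),(s_i))$ is obtained by restriction, and the axioms (\ref{1.2.2}) are immediate specializations of (\ref{1.1.2}), (\ref{1.1.4}) and (\ref{1.1.7}) at the relevant low indices (using $\dd_2^\al = \dda_1.s_1$ to rewrite $\dda_1.\ddb_1 = \ddb_1.\dd_2^\al$ into the displayed form); in the cubical case, (\ref{1.2.3}) is just (\ref{1.1.3}) for $j=i=1$.

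The content lies in the backward direction. Starting from a reduced system, I \emph{define} the higher faces and degeneracies by the formulas (\ref{1.1.8}), $\dda_i := \dda_1.\s_i'$ and $e_i := \s_i.e_1$, and check that the full axioms hold. The Moore relations (\ref{1.1.6}) guarantee that the words $\s_i, \s_i'$ behave as mutually inverse permutations, so all computations involving only transpositions reduce to symmetric-group algebra. The key auxiliary step is to iterate the reduced exchange axioms $s_i.\dda_1 = \dda_1.s_{i+1}$ and $e_1.s_i = s_{i+1}.e_1$ along the definitions of $\s_i$ and $\s_i'$, obtaining general sliding formulas that relocate $\dda_1$ (resp.\ $e_1$) across an arbitrary permutation in the $s_k$. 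Substituting these into $\dda_j.s_i$ and $s_i.e_j$, the four cases of (\ref{1.1.7}) each collapse into a Moore-relation identity among the $s_k$. The face-face law (\ref{1.1.2}) then reduces, after pulling both sides down to a composite of two $\dda_1$'s, to the single axiom $\dda_1.\ddb_1 = \ddb_1.\dd_1^\al.s_1$ together with a permutation equality; the face-degeneracy law (\ref{1.1.4}), split into its three cases, reduces to $\dda_1.e_1 = \id$ plus the analogous bookkeeping. For the cubical case, (\ref{1.1.3}) with $j \le i$ reduces via the now-established (\ref{1.1.7}) to the relation $e_1e_1 = s_1.e_1e_1$, which is precisely the extra axiom (\ref{1.2.3}).

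The main obstacle is the permutation bookkeeping, not any conceptual step: one must verify that the various words in the $s_k$ produced by the sliding formulas on the two sides of each derived equation coincide. I would handle this uniformly by recognising each such word as (a block version of) the cyclic shift encoded by $\s_i = s_{i-1}.\ldots.s_1$ or its inverse $\s_i'$, and then invoking the fact that (\ref{1.1.6}) is Moore's presentation of the full symmetric group, so any equality of permutations in the intended action automatically lifts to an equality of words modulo the Moore relations. This packages the case-analysis into a short list of permutation identities, each checked once, and keeps the argument combinatorial rather than axiom-chasing line by line.
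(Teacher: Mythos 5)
Your proposal is correct and follows essentially the same route as the paper: define the remaining faces and degeneracies by (\ref{1.1.8}), slide $\dda_1$ and $e_1$ across words in the transpositions using the reduced exchange axioms, and settle the resulting equalities of words in the $s_k$ by identifying them with permutations, using that the Moore relations (\ref{1.1.6}) present the symmetric group. The paper carries out only the face--face case $\dda_i.\ddb_j = \ddb_j.\dda_{i+1}$ explicitly, by exactly this method; your plan covers the remaining cases in the same way.
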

\begin{proof}
Defining the other faces and degeneracies by (\ref{1.1.8}), one can prove the global cubical relations.

	For instance, letting $ j \le i$, we have:

$$
\dda_i. \ddb_j   =   \dda_1 .(s_1. ... .s_{i-1}). \ddb_1 .(s_1. ... .s_{j-1})  
=   \dda_1  \ddb_1 .(s_2. ... .s_i).\s_j '   =   \ddb_1  \dda_1 .(\s_{i+1} '  \s_j ' ),  
$$
$$
\ddb_j .\dda_{i+1}  =   \ddb_1 .(s_1. ... .s_{j-1}). \dda_1 .\s_{i+1} '  =  \ddb_1 \dda_1 .(s_2. ... .s_j).\s_{i+1} '  =  \ddb_1 \dda_1 .(s_1 \s_{j+1} '  \s_{i+1} ').
$$
	Now, it suffices to verify that the two operators at the end of these equalities coincide. In fact, in the 
symmetric group $ S_n $ of permutations of the set $ \{1,..., n\}$, $ \s_i ' $ is identified with the 
permutation:
$$
\bsi _i  =  (i, 1,..., \hat{i},..., n).
$$

	Thus, always for $ j \le i, $ the operators $ \s_{i+1}\s_j $ and $ \s_{j+1}\s_{i+1} $ correspond to the 
permutations:
$$
\bsi _{i+1}\bsi _j  =  (i+1, j, 1,..., \hat{j},..., (i+1)\hat{},..., n),
$$  $$
\bsi _{j+1}\bsi _{i+1}  =  (j, i+1, 1,..., \hat{j},..., (i+1)\hat{},..., n),
$$	
and the transposition $ s_1 $ turns one permutation into the other.\end{proof}

\subsection{A setting for cospans}\label{1.3}
   The model of our construction of cubical cospans, in Part I, is the {\it 
formal cospan} category $ \We $ and its Cartesian powers $ \We^n $ $(n \ge 0)$
%
    \begin{equation}
    \begin{array}{c}  \xymatrix  @C=10pt @R=20pt
{
&&&&
~(-1,-1)~   \ar[r]    \ar[d]  &  ~(0,-1)~   \ar[d]  &  ~(1,-1)~   \ar[l]  \ar[d]  &
\Bu    \ar[r]^1   \ar[d]^2  &
\\ 
~-1~   \ar[r]     &  ~0~  &  ~1~   \ar[l]   &&
~(-1,0)~   \ar[r]     &  ~(0,0)~  &  ~(1,0)~   \ar[l]  &
\\  
&  \We  &&&
~(-1,1)~   \ar[r]    \ar[u]  &  ~(0,1)~   \ar[u]  &  ~(1,1)~   \ar[l]  \ar[u]
&  \We^2
}
    \label{1.3.1} \end{array} \end{equation}
\smallskip

	(Identities and composed arrows are always understood in such diagrams of finite categories.) 
Thus, an {\it $n$-cospan} in the category $ \X $ is a functor $ u \c \We^n \to \X. $ But, in order to be 
able to compose them, in direction $ i = 1,..., n, $ we need (a full choice of) pushouts in $ \X$.

	More generally, according to the terminology of Part I, a {\it pt-category}, or {\it category with 
distinguished pushouts}, is a ($\U$-small) category where {\it some} spans $ (f, g) $ have {\it one 
distinguished} pushout $ (f', g') $ (in a symmetric way, of course)
%
    \begin{equation}
    \begin{array}{c}  \xymatrix  @C=10pt @R=10pt
{
\Bu   \ar[rr]^f    \ar[dd]_g  &&  \Bu   \ar[dd]^{f'}   & \qq &
~x~   \ar[rr]^f    \ar[dd]_1  &&  ~x'~   \ar[dd]^1
\\ 
&    \ar@{--}[r]   \ar@{--}[d] &   &&
&    \ar@{--}[r]   \ar@{--}[d] &
\\
\Bu   \ar[rr]_g    &&   \Bu  &&
~x~   \ar[rr]_f      &&  ~x'~  
}
    \label{1.3.2} \end{array} \end{equation}

\Ndt  and we assume the following {\it unitarity constraints}:

\Ndt   (i)	each square of identities is a distinguished pushout,

\Ndt   (ii) {\it if} the pair $ (f, 1) $ has a distinguished pushout, this is $ (1, f) $ (as in the right diagram 
above).

	A {\it pt-functor} $ F \c \X \to \Y $ is a functor between pt-categories which {\it strictly} preserves 
the distinguished pushouts. We speak of a {\it full} (resp. {\it trivial}) {\it choice}, or of a {\it 
category} $ \X $ {\it with full} (resp. {\it trivial}) {\it distinguished pushouts}, when all the spans in $ 
\X $ (resp. only the spans of identities) have a distinguished pushout. 

	The category $ \ptCat $ of pt-categories and pt-functors is $\U$-complete and $\U$-cocom-plete. 
For instance, the product of a family $ (\X_j)_{j\in J} $ of pt-categories indexed on a $\U$-small set is 
the Cartesian product $ \X $ (in $ \Cat), $ equipped with those (pushout) squares in $ \X $ whose 
projection in each factor $ \X_j $ is a distinguished pushout. In particular, the terminal object of $ 
\ptCat $ is the terminal category $ \bo $ with the unique possible choice: its only square is a 
distinguished pushout.

	$\Cat $ embeds in $ \ptCat, $ equipping a small category with the trivial choice of pullbacks (a 
procedure which is left adjoint to the forgetful functor $ \ptCat \to \Cat). $ Limits and colimits are 
preserved by this embedding. Our construction requires this sort of double setting $ \Cat \subset 
\ptCat, $ with `models' $ \We^n $ having a trivial choice and cubical cospans $ \We^n \to \X $ living in 
categories with a full choice (which is necessary to compose them). 

	Notice that the category $ \We^n $ {\it has} all pushouts; however, should we use the full choice 
suggested by diagram (\ref{1.3.1}), a pt-functor $ \We^2 \to \X $ would only reach very particular 2-cubical 
cospans. Notice also that, in the absence of the unitarity constraint (i) on the choice of pushouts, the 
terminal object of $ \ptCat $ would still be the same, but a functor $ \bo \to \X $ could only reach an 
object whose square of identities is distinguished. On the other hand, condition (ii) just simplifies 
things, making our units work strictly.

\subsection{The structure of the formal cospan}\label{1.4}  
The category $ \We $ has a basic structure of formal 
symmetric interval, with respect to the Cartesian product in $ \Cat $ (and $ \ptCat). $ This structure 
consists of two {\it faces} $ (\ddm, \ddp), $ a {\it degeneracy} $ (e) $ and a {\it transposition} 
symmetry $ (s)$
%
    \begin{equation}
    \begin{array}{ll}
\dda  \c \bo    \rrw    \We,	   \q   e \c \We \to \bo,	 \q &
 s \c \We^2 \to \We^2  \q (\al  = \pm 1),
\\[3pt]
\dda (*)  =  \al ,		&		s(t_1, t_2)  =  (t_2, t_1).
    \label{1.4.1}\end{array}
    \end{equation}

	(A functor with values in the ordered set $ \We^n $ is determined by its value on objects). 
Composition is - formally - more complicated. The {\it model of binary composition} is the pt-
category $ \We_2 $ displayed below, with one non-trivial distinguished pushout
	
%
    \begin{equation}
    \begin{array}{c}  \xymatrix  @C=8pt @R=1pt  
{   
&&&&   ~0~    &&&&&&&  
\sing   \ar[rr]^{\ddp}   \ar[dddd]_{\ddm}       &&   ~\We~  \ar[dddd]^{k^-}
\\   
&&&   \ar@{--}[rd]   &&  \ar@{--}[ld]
\\
&&   ~a~ \ar[rruu]   &&&&   ~c~  \ar[lluu]
\\
\\
~ -1~   \ar[rruu]  &&&&   ~b~  \ar[lluu]  \ar[rruu]   &&&&   ~1~   \ar[lluu] &&& 
~\We~  \ar[rr]_{k^+}         &&   ~\We_2~  &
}
    \label{1.4.2} \end{array} \end{equation}

	Now, the commutative square at the right hand above is {\it not} a pushout; in fact, in $ \Cat $ or $ 
\ptCat, $ the corresponding pushout is the subcategory $ \We_{(2)} $ lying at the bottom of $ \We_2 $ 
in the diagram above:
    \begin{equation}
-1  \to   a   \lw   b   \to   c   \lw   1		\qq	\We_{(2)}.
    \label{1.4.3}  \end{equation}
	But the relevant fact is that a category $ \X $ with {\it full} distinguished pushouts `believes' that the 
square above is (also) a pushout. Explicitly, we have a - so to say - {\it para-universal} property of $ \We_2$:

\Ndt  (a) given two cospans $ u, v \c \We \to \X $ in a category with full distinguished pushouts, with
 $\ddp_1u = \ddm_1 v, $
  there is one pt-functor $ [u, v] \c \We_2 \to \X $ such that $ [u, v].k^- = u $ and $ [u, v].k^+ = v.$

	The {\it concatenation map} 
    \begin{equation}
	k \c \We \to \We_2,
    \label{1.4.4}  \end{equation}
is an embedding, already displayed above by the labelling of objects in $ \We_2.$

	As usual in formal homotopy theory, the functors 
$ \; (-)^n_i= \We^{i-1} \times  - \times  \We^{n-i} \c $ $ \ptCat  \to   \ptCat $ 
produce the higher structure of the interval, for $ 1 \le i \le n $ and $ \al  = \pm 1$
%
    \begin{equation}
    \begin{array}{ll}
\dda_i \c \We^{n-1} \to \We^n,	\qq &	   \dda_i(t_1,..., t_{n-1})  =  (t_1,..., \al ,..., t_{n-1}),
\\[3pt]
e_i \c \We^n \to \We^{n-1},	&	   e_i(t_1,..., t_n)  =  (t_1,..., \hat{t}_i,..., t_n),
\\[3pt]
s_i \c \We^{n+1} \to \We^{n+1},	   &    s_i(t_1,..., t_{n+1})  =  (t_1,..., t_{i+1}, t_i,..., t_n).
    \label{1.4.5}\end{array}
    \end{equation}

	Moreover, acting on (\ref{1.4.2}) and $ k, $ these functors yield the {\it n-dimensional i-concate-nation 
model} $ \We^{ni}_2 $ and the {\it n-dimensional i-concatenation map} $ k_i \c \We^n \to 
\We^{ni}_2 $ 

%
    \begin{equation}
    \begin{array}{lll}  \xymatrix  @C=20pt @R=15pt   
{ ~ \We^{n-1}~
\ar[r]^{\ddp_i} \ar[d]_{\ddm_i} & ~\We^n~   \ar[d]^{k^-_i}
&\q& 
\We^{ni}_2  =  \We^{i-1} \ti  \We_2 \ti  \We^{n-i},
\\
~\We^n~    \ar[r]_{k^+_i}  &  ~\We^{ni}_2~ &&
k_i = \We^{i-1} \ti  k \ti  \We^{n-i}\c \We^n \to \We^{ni}_2.
}
    \label{1.4.6} \end{array} \end{equation}

	Again, the square above is not a pushout, but $ \X $ (having full distinguished pushouts) believes it 
is.

\subsection{The symmetric pre-cubical category of cospans}\label{1.5}
A {\it symmetric pre-cubical category} 
    \begin{equation}
\A  =  ((A_n), (\dda_i), (e_i), (s_i), (+_i)),
    \label{1.5.1}  \end{equation}
is a symmetric cubical set with compositions, satisfying the consistency axioms (cub.1-2) of I.1.2, 
where the transpositions $ s_i $ agree with the compositions $ +_i $ (see I.2.3). Notice that we are not 
(yet) assuming that the cubical compositions $ +_i $ behave in a categorical way or satisfy 
interchange, in any sense, even weak.

	For a category $ \X $ with full distinguished pushouts, there is such a structure $ \A = \Cosp_*(\X). 
$ (Below, we will use a different notation for the {\it weak cubical category} $ \CCosp_*(\X), $ a 
richer structure whose components are categories instead of sets.) An $n$-cube, or {\it n-cubical 
cospan}, is a functor $ u \c \We^n \to \X; $ faces, degeneracies and transpositions are computed 
according to the formulas (\ref{1.4.5}) for the formal interval $ \We$  
$$
\Cosp_n(\X)  =  \Cat(\We^n, \X)  =  \ptCat(\We^n, \X),
$$
    \begin{equation}
    \begin{array}{ll}
\dda_i(u)  =  u.\dda_i \c \We^{n-1} \to \X,  &
 \dda_i(u)(t_1,..., t_{n-1})  =  u(t_1,..., t_{i-1}, \al ,..., t_{n-1}),	
\\[3pt]
e_i(u)  =  u.e_i \c \We^n  \to \X,   &
e_i(u)(t_1,..., t_n)  =  u(t_1,..., \hat{t}_i,..., t_n),
\\[3pt]
s_i(u)  =  u.s_i \c \We^{n+1} \to \X,   &
s_i(u)(t_1,..., t_{n+1})  =  u(t_1,..., t_{i+1}, t_i,..., t_{n+1}).
    \label{1.5.2}\end{array}
    \end{equation}

	The $i$-composition $ u +_i v $ is computed on the $i$-concatenation model $ \We^{ni}_2 $ 
(\ref{1.4.6}), as
%
    \begin{equation}
	u +_i v  =  [u, v].k_i \c \We^n \to \We^{ni}_2 \to \X   \qq	(\ddp_i (u) = \ddm_i (v)).
    \label{1.5.3}  \end{equation}

	A {\it symmetric cubical functor} $ F \c \A \to \B $ between symmetric pre-cubical categories is a 
morphism of symmetric cubical sets which preserves all composition laws.

	For an ordinary (i.e., 1-cubical) cospan $ u \c \We \to \X, $ we write $ u = (\um, \up) \c \Xm \todot 
\Xp $ to specify its cubical faces (notice the dot-marked arrow).

\subsection{The weak cubical category of cospans}\label{1.6}
Now, starting from a category $ \X $ with full 
distinguished pushouts, we have a {\it symmetric weak cubical category} $ \CCosp_*(\X) $ (as 
defined in I.4), which is unitary (under the unitarity constraint \ref{1.3}(i)-(ii) in $ \X). $ It consists of 
the following data.

\Ndt (a) Our previous $ \Cosp_*(\X) $ forms the symmetric pre-cubical category of cubical objects.

\Ndt  (b) A {\it transversal n-map} $ f \c u \to u', $ is a natural transformations of $n$-cubes $ f \c u \to u' 
\c \We^n \to \X, $ or equivalently an $n$-cube in the pt-category $ \X^\bt $ of morphisms of $ \X $ 
(equipped with the coherent choice of distinguished pushouts). Transversal maps form a symmetric 
pre-cubical category $ \Cosp_*(\X^\bt), $ with:

    \begin{equation}
    \begin{array}{ll}
\dda_i(f)  =  f.\dda_i \c  u\dda_i \to u'\dda_i \c  \We^{n-1} \to \X	\qq   &   (i \le n, \;  \al  = \pm 1),
\\[3pt]
e_i(f)  =  f.e_i \c  u.e_i \to u'.e_i \c  \We^n \to \X	  &   (i \le n),
\\[3pt]
s_i(f)  =  f.s_i \c  u.s_i \to u'.s_i \c  \We^{n+1} \to \X	  &   (i \le n),
\\[3pt]
f +_i g  =  [f, g].k_i \c \We^n \to \X^\bt	  &   (\ddp_i f = \ddm_i g).
    \label{1.6.1}\end{array}
    \end{equation}

	Notice that an $n$-map should be viewed as {\it (n+1)}-dimensional, and will also be called an {\it 
(n+1)-cell}.
	
\Ndt  (c) The symmetric pre-cubical functors of {\it transversal faces} $(\dda_0) $ and {\it transversal 
degeneracy} $ (e_0) $ simply derive, contravariantly, from the obvious functors linking the categories 
$ \bo $ and $ \bt $ 
%
    \begin{equation}
e_0 \c  \Cosp_*(\X)     \lrl    \Cosp_*(\X^\bt)  \cc  \dd^\al_0
\qq   ( \dd^\al_0  \c \bo   \rlr         \bt \cc e_0,  \;\;  \al  = \pm).
    \label{1.6.2}    \end{equation}		

\Ndt   (d) The composition $ hf \c u \to u'' $ of transversally consecutive $n$-maps is the composition of 
natural transformations. It is categorical and preserves the symmetric cubical structure.

\Ndt  (e) The cubical composition laws behave categorically up to suitable comparisons for associativity and 
interchange, which are invertible {\it special} transversal maps. (A transversal $n$-map $ f $ is said to 
be {\it special} if its $ 2^n $ {\it vertices} $ f(\al _1,..., \al _n) $ are identities, for $ \al _i = \pm ; $ see I.4.1.)

	These comparisons are explicitly constructed in I.4.4, after a study of the structure of the models 
$ \We^n $ (see I.3.5, I.3.6).

\subsection{Truncation}\label{1.7}

Truncating everything at cubical degree $ n, $ we get the {\it symmetric weak 
(n+1)-cubical category} $ n\CCosp_*(\X), $ which contains the $k$-cubes and $k$-maps of $ 
\CCosp_*(\X) $ for $ k \le n $ (I.4.5). Indeed, its $n$-maps are `actually' (n+1)-dimensional cells.

	In the 1-truncated case $ 1\CCosp_*(\X) = \CCosp(\X) $ there is only one cubical direction and no 
transposition, and we drop the term `symmetric': a {\it weak 2-cubical category} amounts, precisely, to 
a weak (or pseudo) double category, as studied in \cite{GP1}-\cite{GP4}: a structure with a strict `horizontal' 
composition and a weak `vertical' composition, under strict interchange.

	The 2-truncated structure $ 2\CCosp_*(\X), $ a symmetric quasi 3-cubical category, is related to 
Morton's constructions \cite{Mo}: a `double bicategory' of 2-cubical cospans. Loosely speaking, and 
starting with $ 2\CCosp_*(\X), $ one should omit the transposition and restrict transversal maps to the 
special ones. See also \cite{Ba}.

\section{Collarable and collared cospans}\label{2}

	After recalling {\it collarable} (ordinary) cospans, from Part II, we define now {\it collared cospans,} 
a variation of the former (already hinted at in II.2.2), where collars are part of the data instead of just 
existing. We begin with the weak double category of {\it pre-collared} cospans $ \pCCc(\Top) $ and 
single out within the latter the weak double subcategory $ \CCc(\Top) $ of collared cospans. Higher 
cubical degree is deferred to Section 4.

\subsection{Topological embeddings. }\label{2.1}
Let us recall that a {\it topological embedding} is an injective map $ f 
\c X \to Y $ in $ \Top, $ where the space $ X $ has the pre-image topology. A closed injective map is 
necessarily an embedding, and will be called a {\it closed embedding}. Open embeddings are similarly 
defined, but not explicitly used here.

	It is easy to see that a pushout of topological embeddings (resp. closed embeddings) in $ \Top $ 
consists of topological embeddings (resp. closed embeddings) and is also a pullback.

	Decomposing a space $ X $ into a categorical sum $ X = \Sum X_j $ (in $ \Top) $ amounts to giving 
a partition of the space $ X $ into a family of {\it clopens} (closed and open subspaces).

\subsection{Collarable maps}\label{2.2}
	Now, we recall from Part II the construction of the weak double subcategory $ 
\CCblc(\Top) \subset \CCosp(\Top) $ of collarable cospans. Motivations for these definitions have 
been recalled in the Introduction.

	To begin with, a {\it collarable map} $ f \c X \to Y $ (II.2.1) is a continuous mapping which can be 
decomposed into a sum of two maps, so that: 
    \begin{equation}
f  =  f_0 + f_1 \c  X_0 + X_1 \to Y_0 + Y_1		\qq  \text{({\it collarable decomposition}),}
    \label{2.2.1}  \end{equation}

\Ndt   (i)  $f_0 \c X_0 \to Y_0 $ is a {\it homeomorphism}, also called a {\it trivially collarable}, or {\it 0-collarable} map,

\Ndt   (ii) $ f_1 \c X_1 \to Y_1 $ is a {\it 1-collarable} map, i.e. it admits a {\it collar} $ F $ which extends it 
    \begin{equation}
F \c  IX_1  \to Y_1,	\qq	f_1  =  F(-, 0) \c X_1 \to Y_1,
    \label{2.2.2}  \end{equation}
to the cylinder $ IX_1 = X_1 \ti  [0, 1] $ and is a {\it closed embedding} (\ref{2.1}), with $ F(X_1 \ti  
[0, 1[) $ {\it open in} $ Y_1.$

	Both conditions are only satisfied by $ \id\es. $ Omitting empty components, an {\it irreducible} 
collarable decomposition is uniquely determined. Collarable maps are also closed embeddings; they 
are {\it not} closed under composition in $ \Top $ (II.2.9).

\subsection{Collarable cospans}\label{2.3}
 Limits and colimits in the weak double category $ \CCosp(\Top) $ are 
recalled in II.1.3. As defined there, a {\it sub-cospan} of a topological cospan
    \begin{equation}
u  =  (\um \c \Xm \to \Xz \lw \Xp \cc \up),
    \label{2.3.1}  \end{equation}
is a regular subobject of $ u, $ i.e. an equaliser of two 1-maps $ u \to v $ in $ \CCosp(\Top). $ It 
amounts to assigning three subspaces $ (\Ym, \Yz, \Yp) $ such that
    \begin{equation}
Y^t \subset  X^t,   \qq   u^\al (Y^\al )  \subset  \Yz   \qq   (t \in  \We;  \; \al  = \pm ).
    \label{2.3.2}  \end{equation}
	We say that the sub-cospan is {\it open} (resp. {\it closed}) in $ u $ if so are the three subspaces $ 
Y^t\subset X^t . $ The sub-cospans of $ u $ form a complete lattice, which is a sublattice of $ ~ \P(\Xm) \ti  
\P(\Xz) \ti  \P(\Xp).$

	Thus, to give a decomposition $ u = \Sum u_j $ into a sum of cospans amounts to give a {\it clopen 
partition} $ (u_j) $ of $ u, $ i.e. a cover of $ u $ by disjoint sub-cospans, {\it closed and open} in $ u 
$ (II.1.3).

	A {\it collarable cospan} (II.2.2) of topological spaces is a topological cospan $ u \c \Xm \todot 
\Xp $ which {\it admits a collarable decomposition}, i.e. can be decomposed in a binary sum, where
%
    \begin{equation}
    \begin{array}{ll}
u  =  u_0 + u_1 = (\Xm_0 + \Xm_1  \to \; \Xz_0 + \Xz_1  \lw  \Xp_0 + \Xp_1),
\\[3pt]
u_0  =  (\um_0 \c   \Xm_0 \to  \Xz_0  \lw  \Xp_0  \cc  \up_0), 	\;
u_1  =  (\um_1 \c   \Xm_1 \to  \Xz_1  \lw  \Xp_1  \cc  \up_1),
    \label{2.3.3}\end{array}
    \end{equation}

\Ndt   (i)  $ u_0 = (\um_0, \up_0) $  {\it is a pair of homeomorphisms}, also called a {\it trivially collarable}, or {\it 
0-collarable} cospan,
	
\Ndt   (ii) $ u_1 = (\um_1, \up_1) $ is a {\it 1-collarable cospan}; by this we mean that it {\it admits a collar 
cospan} (or, simply, a {\it collar}), i.e. a cospan $ (\Um, \Up) $ formed of a pair of collars of its maps 
having {\it disjoint images}. In other words, we have two disjoint closed embeddings
%
    \begin{equation}
U  =  (\Um \c  I\Xm_1 \to \Xz_1 \lw I\Xp_1  \cc \Up),	\q	u^\al_1  =  U^\al (-, 0) \c X^\al_1 \to \Xz,
    \label{2.3.4}  \end{equation}
where $ U^\al (X^\al_1 \ti  [0, 1[) $ is open in $ \Xz_1$.

\smallskip
	Both conditions are only satisfied by the empty cospan $ e_1(\es). $ Omitting empty components, 
an {\it irreducible} collarable decomposition is uniquely determined. The maps $ u^\al  $ are also 
closed embeddings; in the 1-collarable case, they have disjoint images. 

	Cubical faces and degeneracy are inherited from  $\CCosp(\Top)$
    \begin{equation}
\dda_1 u  =  X^\al,    \q   e_1(X)  =  (\id  \c X\to X \lw X \cc \id)   \q   (\al  = \pm 1).
    \label{2.3.5}  \end{equation}
	We have proved (Thm. II.2.4) that collarable cospans are closed under concatenation: given a 
consecutive collarable cospan $ v \c \Ym \todot \Yp $ (with $ \Xp = A = \Ym), $ the cospan
 $ w = u +_1 v $
 decomposes in a trivial part, computed by a pushout of homeomorphisms (over the clopen subspace 
$ \Xp_0 \cap \Ym_0 $ of $ A $ on which $ u $ and $ v $ are {\it both} 0-collarable), and a 1-collarable part, 
computed by a {\it 1-collarable pushout} (over the complement clopen subspace of $ A). $ The latter 
pushout is {\it homeomorphic} to a standard homotopy pushout (Thm. II.2.5). Topological spaces, 
collarable cospans and their transversal maps form thus a transversally full weak double subcategory
    \begin{equation}
\CCblc(\Top)  \subset  \CCosp(\Top).
    \label{2.3.6}  \end{equation}

	Notice that a topological map, even if collarable, can not be viewed as a collarable cospan, generally: 
the category $ \Top $ is {\it transversally embedded} in $ \CCblc(\Top), $ sending a map $ f \c X \to 
Y $ to the same 0-map.

\subsection{Pre-collared cospans}\label{2.4}
A {\it pre-collared (topological) cospan} $ \; U = (u; \Um, \Up) \; $ will 
consist of a cospan $ u = (\um \c \Xm \to \Xz \lw \Xp \cc \up) $ of {\it topological embeddings} 
equipped with two {\it pre-collars} $ U^\al ; $ these are maps which extend to the cylinder the 
corresponding maps $ u^\al  $ of $ u$

%
    \begin{equation}
    \begin{array}{c}  \xymatrix  @C=20pt @R=20pt
{
~I\Xm~   \ar[r]^{~\Um}    &  ~\Xz~   &   ~I\Xp~   \ar[l]_{\Up}   &\q&
U^\al  \c IX^\al  \to \Xz,
\\ 
~\Xm~   \ar[u]^{~\dm}   \ar[ru]_{~\um}    &&   ~\Xp~  \ar[u]_{~\dm}   \ar[lu]^{~\up}   &&
u^\al (x)  =  U^\al (x, 0).
}
    \label{2.4.1} \end{array} \end{equation}
\smallskip

	Notice that these pre-collars need not be injective, nor have disjoint images, and we are not (yet) 
assuming the existence of a `collared decomposition'. The {\it underlying cospan} of $ U $ is $ |U| = 
u. $ Faces and degeneracies are defined as follows, consistently with the procedure $ | - |$:
    \begin{equation}
\dda U  =  \dda u  =  X^\al ,  \;\;
e_1(X)  =  (X \to X \lw X;  eX \c IX \to X \lw IX \cc eX).
    \label{2.4.2}  \end{equation}

	Pre-collared cospans form a 1-cubical set, with topological spaces in degree 0.

	Every collarable cospan $ u = (\um \c \Xm \to \Xz \lw \Xp \cc \up) $ underlies some pre-collared 
cospan $ U. $ In fact, if $ u $ is 1-collarable, it admits a collar-cospan (2.3); if $ u $ is 0-collarable, 
i.e., a pair of homeomorphisms, then it admits a {\it trivial pre-collar}, similar to a degenerate one, with 
$ U^\al  = u^\al .eX^\al  \c IX^\al  \to X^\al  \to \Xz. $ In the general case, $ u $ has a collarable decomposition 
(\ref{2.3.3}), and it suffices to take the topological sum of the previous solutions - also because the 
cylinder functor preserves sums.

\subsection{The weak double category of pre-collared cospans}\label{2.5}
Let us suppose we have two pre-collared cospans, 
$ U = (u; U^\al ) $ as above (in (\ref{2.4.1})) and
    \begin{equation}
V  =  (v; \Vm, \Vp),	\qq    v  =  (\vm \c \Ym \to \Yz \lw \Yp \cc \vp),
    \label{2.5.1}  \end{equation}
which are consecutive: $ \Xp = A = \Ym. $ As in the collarable case (Part II), to define their {\it 
concatenation} $ W = U +_1 V, $ we concatenate their underlying cospans getting a cospan $ w = u 
+_1 v $ (of topological embeddings, by \ref{2.1}) and then form the new pre-collars using two of the 
old ones, namely $ \Um $ and $ \Vp $
%
    \begin{equation}
    \begin{array}{c}  \xymatrix  @C=5pt @R=3pt  
{   
&&&&   ~Z^0~    &&&&&
U +_1 V  =  (w; W^\al ),
\\   
&&&   \ar@{--}[rd]   &&  \ar@{--}[ld]    &&&&   w = u +_1 v
\\
~I\Xm~   \ar[rr]^-{\Um}  &&   ~\Xz~ \ar[rruu]^\hm   &&&&
~\Yz~  \ar[lluu]_\hp  &&  ~I\Yp~   \ar[ll]_-{\Vp}
&
\q\;\;  =  (\hm \um, \hp \up),
\\
&&&&&&&&&
W^-   =  \hm \Um,
\\
  ~~\Xm~   \ar[uu]^{\dm}   \ar[rruu]_\um    &&&&
~A~  \ar[lluu]^\up  \ar[rruu]_\vm   &&&&  ~\Yp~  \ar[uu]_{\dm}   \ar[lluu]^\vp
&  W^+   =  \hp \Vp,
}
    \label{2.5.2} \end{array} \end{equation}
	A {\it transversal map} of pre-collared cospans
    \begin{equation}
f  =  (\fm, \fz, \fp) \c U \to V,	\qq   f^t \c X^t  \to \Y^t   \qq   (t \in  \We),
    \label{2.5.3}  \end{equation}
is a triple of maps $ f^t $ which commute with the pre-collars of $ U, V, $ via their cylindrical 
extensions $ F^\al  = If^\al  \c IX^\al  \to IY^\al . $ The {\it underlying transversal map} $ |f| \c |U| \to |V| $ 
has the same components $ f^t.$

	Faces, degeneracies and transpositions of transversal maps are defined in the obvious way, 
consistently with domains and codomains. Finally, the comparison for associativity is provided by the homologous comparison for the 
concatenation of the underlying cospans in the weak double category $ \CCosp(\Top ), $ which is 
easily seen to satisfy the condition of consistence with pre-collars.

	We have thus defined the {\it weak double category} $ \pCCc(\Top ) $ {\it of pre-collared 
cospans} and equipped it with a forgetful double functor
    \begin{equation}
| - | \c \pCCc(\Top ) \to \CCosp(\Top ),
    \label{2.5.4}  \end{equation}
which is the identity on objects and faithful on transversal maps. (Concatenation is strictly preserved, 
since we are using {\it one} choice of distinguished pushouts in $ \Top $ to concatenate cospans, in 
both structures.)

\subsection{Collared cospans}\label{2.6}
Let $ U = (u; U^\al ) $ be a pre-collared cospan, as in (\ref{2.4.1}). We say 
that $ U $ is {\it collared} if $ u $ can be decomposed into a (uniquely determined) binary sum, 
where:
%
    \begin{equation}
u  =  u_0 + u_1  =  (\Sum \um_i \c  \Sum \Xm_i \to \Sum \Xz_i \lw \Sum \Xp_i   \cc \Sum \up_i)\q  (i = 0, 1),
    \label{2.6.1}  \end{equation}
    \begin{equation}
U^\al (IX^\al_0)  \subset  X^0_0,   \qq   U^\al (IX^\al_1)  \subset  X^0_1,
    \label{2.6.2}  \end{equation}
so that the restrictions $ (\Um_i, \Up_i) $ form a pre-collar cospan of $ u_i; $ more precisely, we want that:

\Ndt  (i)  $ u_0 = (\um_0, \up_0) $ {\it is a pair of homeomorphisms} and $ U^\al_0 = u^\al_0.eX^\al_0 \c IX^\al_0 \to X^\al_0 \to X^0_0$;

\Ndt   (ii) the pair $ (\Um_1, \Up_1) $ is {\it a 1-collar of} $ u_1 = (\um_1, \up_1), $ i.e. it consists of two disjoint closed embeddings (extending the maps $ u^\al_1) $ and $ U^\al_1(X^\al_1 \ti  [0, 1[) $ is open in $ X^0_1$.

\smallskip

	Collared cospans form the transversally full {\it weak double subcategory} $ \CCc(\Top ) \subset 
\pCCc(\Top ). $ The fact that they are closed under concatenation in the latter is proved as in Part II 
for collarable cospans (Thm. II.2.4).

	We have thus a commutative triangle of double functors
    \begin{equation}
    \begin{array}{c}  \xymatrix  @C=20pt @R=20pt
{
~~\CCc(\Top )~~   \ar@{^{(}->}[r]  \ar[rd]  &    ~~\pCCc(\Top ) ~~   \ar[d]^{| - |}
\\ 
&    ~~\CCosp(\Top )~~
}
    \label{2.6.3} \end{array} \end{equation}
\smallskip
which are transversally faithful (the inclusion is also transversally full).

\section{Pre-collars in cubical degree 2}\label{3}

	We define here the weak 3-cubical category $ 2\pCCc_*(\Top) $ of 2-cubical pre-collared cospans. 
The collared case will be treated directly in unbounded cubical degree, in the next section.

\subsection{Notation for cubical cospans}\label{3.1}
Let  $ \t = (t_1,..., t_n) \in  \We^n $ be a multi-index with 
coordinates $ t_i \in \{- 1, 0, 1\}. $ If $ t_i \neq 0, $ we write
    \begin{equation}
\t^{\sharp i}  =  (t_1,..., t_{i-1}, 0, t_{i+1},..., t_n),
    \label{3.1.1}  \end{equation}
the point of $ \We^n $ obtained by annihilating the $i$-th coordinate.

	A cubical cospan $ u \c \We^n \to \Top $  will be written as follows
    \begin{equation}
u  =  (X(\t), u(i, \t)),  \; \; \;  u(i, \t) \c X(\t) \to X(\t^{\sharp i} )  \; \; \;  (i = 1,..., n; \;  \t \in \We^n;  \;  t_i \neq  0),
    \label{3.1.2}  \end{equation}
where $ u(i, \t) $ is a map in direction $ i, $ as exemplified below for $ n = 1, 2$
%
    \begin{equation}
    \begin{array}{c}  \xymatrix  @C=40pt @R=20pt
{
\q\;\;   ~\Xm~~   \ar[r]^{u(1-)}     &  ~~\Xz~~  &  ~~\Xp~~   \ar[l]_{u(1+)}  & \q &
}
    \label{3.1.3} \end{array} \end{equation}
%
%
    \begin{equation}
    \begin{array}{c}  \xymatrix  @C=40pt @R=30pt
{
~X^{--}~   \ar[r]^-{u(1--)}    \ar[d]_{u(2--)}  &  ~X^{0-}~   \ar[d]^{u^{0-}}  &
~X^{+-}~   \ar[l]_-{u(1+-)}  \ar[d]^{u(2+-)} 
\\
~X^{-0}~   \ar[r]^-{u^{-0}}     &  ~X^{00}~  &  ~X^{+0}~   \ar[l]_-{u^{+0}}    & 
\Bu    \ar[r]^1   \ar[d]^2  &
\\  
~X^{-+}~   \ar[r]_-{u(1-+)}    \ar[u]^{u(2-+)}  &  ~X^{0+}~   \ar[u]_{u^{0+}}   &  ~X^{++}~   \ar[l]^-{u(1++)}  \ar[u]_{u(2++)} &&
}
    \label{3.1.4} \end{array} \end{equation}
\smallskip

	Note, in the latter, the simplified notation of the central arrows: $ u^{-0} = u(1, -, 0), $ and so on.

	For a fixed $ i, $ there are $ 2.3^{n-1} $ maps $ u(i, \t) $ in direction $ i. $ Globally, there are $ 
2n.3^{n-1} $ maps in $ u $ and $ n.3^{n-1} $ cospans. Of course, when we speak of a {\it cospan of } 
$u $ we always mean two maps with the same codomain {\it and the same direction}, i.e. a pair $ (u(i, 
\t)), $ with $ t_i = \pm 1 $ (the other coordinates being fixed, as well as $ i$).

	An $n$-cubical cospan $ u $ can also be viewed as a cospan (in direction 1) of $(n-1)$-cubical 
cospans (in directions $ 2,..., n$)
    \begin{equation}
    \begin{array}{ll}
u \c \We \to \Top^{\We^{n-1}},	&   u  =  (\um \c \Xm \to \Xz \lw \Xp \cc \up),
\\[5pt]
X^t   =  (X(t, \t),  u(i, t, \t)),   &   u(i, t, \t) \c  X(t, \t) \to X(t, \t^{\sharp i} ),
    \label{3.1.5}\end{array}
    \end{equation}

\ndt   where $ i = 1,..., n-1 $ and $ \t = (t_2,..., t_n) \in \We^{n-1}$, $ t_i \neq  0. $ (Other presentations of $ u $ can 
be obtained from the transposed cospans $ us_i, $ using the transposition symmetries $ s_i \c \We^n 
\to \We^n, $ cf. \ref{1.4}.)

\subsection{Square cospans and pre-collars}\label{3.2}
   Let 
$ u \c \We^2 \to \Top $ be a 2-cubical topological cospan, with the previous notation (in (\ref{3.1.4})).

	As considered above (in degree $ n$), $ u $ will also be viewed as a cospan (in direction 1) of 
cospans (in direction 2), as follows
    \begin{equation}
u \c \We \to \Top^\We,	  \qq    u  =  (\um \c \Xm \to \Xz \lw \Xp \cc \up),
    \label{3.2.1}  \end{equation}
    \begin{equation}
    \begin{array}{ll}
X^t   =  (u(2, t, -) \c  X^{t-} \to X^{t0} \lw X^{t+} \cc u(2, t, +)),
\\[3pt]
u^\al   =  (u(1, \al , -),  u(1, \al , 0),  u(1, \al , +))	\q  (t \in \We,  \; \al  = \pm ).
    \label{3.2.2}\end{array}
    \end{equation}

	The symmetric presentation of $ u $ can be obtained as above from the transposed cospan $ s_1(u) = 
us \c \We^2 \to \Top, $ using the transposition symmetry $ s \c \We^2 \to \We^2 $ (\ref{1.4}).

	Let us assume that all maps of $ u $ are topological embeddings. A {\it family of pre-collars of $ u 
$ in direction 1} consists of six maps
    \begin{equation}
U(1, \al , t) \c IX^{\al t} \to X^{0t}	   \qq    (\al  = \pm ,  \; t \in \We),
    \label{3.2.3}  \end{equation}
%
%
%
    \begin{equation}
    \begin{array}{c}  \xymatrix  @C=30pt @R=30pt
{
~IX^{--}~   \ar[r]^-{U(1--)}    \ar[d]_{Iu(2--)}  &  ~X^{0-}~   \ar[d]^{u^{0-}}  &
~IX^{+-}~   \ar[l]_-{U(1+-)}  \ar[d]^{Iu(2+-)} 
\\
~IX^{-0}~   \ar[r]^-{U^{-0}}     &  ~X^{00}~  &  ~IX^{+0}~   \ar[l]_-{U^{+0}}    && 
\Bu    \ar[r]^1   \ar[d]^2  &
\\  
~IX^{-+}~   \ar[r]_-{U(1-+)}    \ar[u]^{Iu(2-+)}  &  ~X^{0+}~   \ar[u]_{u^{0+}}   &
~IX^{++}~   \ar[l]^-{U(1++)}  \ar[u]_{Iu(2++)} &&
}
    \label{3.2.4} \end{array} \end{equation}

\Ndt   so that:

\Ndt   (i) each of them is a pre-collar of the corresponding map $ u(1, \al , t), $ i.e. 
$ u(1, \al , t) = U(1, \al , t)(-, 0) $ (\ref{2.4});

\Ndt   (ii) the diagram (\ref{3.2.4}) commutes and its four squares are pullbacks.

\smallskip
  	Symmetrically, a {\it family of pre-collars of $ u $ in direction 2} amounts to the previous notion 
for the transposed 2-cospan $ s_1(u) = us. $ It consists thus of six maps
    \begin{equation}
U(2, t, \al ) \c IX^{t\al}  \to X^{t0}	\qq  (\al  = \pm , \; t \in \We),
    \label{3.2.5}  \end{equation}
which satisfy symmetric conditions. In particular, they form four pullbacks 
%
    \begin{equation}
    \begin{array}{c}  \xymatrix  @C=40pt @R=20pt
{
~IX^{--}~   \ar[r]^-{Iu(1--)}    \ar[d]_{U(2--)}  &  ~IX^{0-}~   \ar[d]^{U^{0-}}  &
~IX^{+-}~   \ar[l]_-{Iu(1+-)}  \ar[d]^{U(2+-)} 
\\
~X^{-0}~   \ar[r]^-{u^{-0}}     &  ~X^{00}~  &  ~X^{+0}~   \ar[l]_-{u^{+0}}    & 
\Bu    \ar[r]^1   \ar[d]^2  &
\\  
~IX^{-+}~   \ar[r]_-{Iu(1-+)}    \ar[u]^{U(2-+)}  &  ~IX^{0+}~   \ar[u]_{U^{0+}}   &
~IX^{++}~   \ar[l]^-{Iu(1++)}  \ar[u]_{U(2++)} &&
}
    \label{3.2.6} \end{array} \end{equation}

 	A {\it pre-collared topological 2-cospan} $ U = (u; U(1, \al , t), U(2, t', \be )) $ will be a 2-cospan 
$ u \c \We^2 \to \Top $ of topological embeddings, equipped with a {\it family of pre-collars}, i.e. a 
pair of such families in both directions. Its {\it underlying 2-cospan} is $ |U| = u$.

	One can note that, because of the pullback condition (ii), all the pre-collars are determined by the 
{\it collar-cross}, consisting of the 4 {\it central collars}, i.e. those which reach the central object

%
    \begin{equation}
    \begin{array}{c}  \xymatrix  @C=30pt @R=25pt
{
&  ~~IX^{0-}~   \ar[d]^{U^{0-}}
\\
~IX^{-0}~   \ar[r]^-{U^{-0}}     &  ~X^{00}~  &  ~IX^{+0}~   \ar[l]_-{U^{+0}}    && 
\Bu    \ar[r]^1   \ar[d]^2  &
\\  
&  ~~IX^{0+}~   \ar[u]_{U^{0+}}   &&&
}
    \label{3.2.7} \end{array} \end{equation}

\subsection{Faces and degeneracies}\label{3.3}
	The faces of the pre-collared 2-cospan $ U $ are the following pre-collared 1-cospans
%
    \begin{equation}
    \begin{array}{ll}
 \dda_1 U  =  ( \dda_1 u;  U(2,\al ,-) \c IX^{\al - } \to X^{\al 0} \lw IX^{\al +} \cc U(2,\al ,+)),
\\[4pt]
\ddb_2U  =  (\ddb_2u;  U(1,-,\be ) \c IX^{-\be}  \to X^{0\be}  \lw IX^{+\be}  \cc U(1,+,\be )).
    \label{3.3.1}\end{array}
    \end{equation}

	For instance, the two faces $  \dda_1 U $ can be pictured as follows:

\begin{equation} \begin{array}{c}
\xy <.5mm, 0mm>:
(30,0) *{}; (30,115) *{};
(20,5) *{(\ddm_1 U)}; (130,5) *{(\ddp_1 U)};
(0,25) *{IX^{-+}};   (150,25) *{IX^{++}}; 
(0,65) *{X^{-0}};    (150,65) *{X^{+0}}; 
(0,105) *{IX^{--}}; (150,105) *{IX^{+-}}; 
(75,65) *{\sst {X^{00}}};  (77,25) *{\sst {IX^{0+}}}; (77,105) *{\sst {IX^{0-}}};
(7,40) *{\sst {U(-+)}}; (83,40) *{\sst {U^{0+}}}; (143,40) *{\sst {U(++)}}; 
(7,90) *{\sst {U(--)}}; (83,90) *{\sst {U^{0-}}}; (143,90) *{\sst {U(+-)}}; 
(20,20); (20,30) **@{-}; (20.4,20); (20.4,30) **@{-}; 
   (130,20); (130,30) **@{-}; (130.4,20); (130.4,30) **@{-}; 
(20,50); (20,80) **@{-}; (20.4,50); (20.4,60) **@{-}; (20.4,70); (20.4,80) **@{-}; 
   (130,50); (130,80) **@{-}; (130.4,50); (130.4,60) **@{-}; (130.4,70); (130.4,80) **@{-}; 
(20,100); (20,110) **@{-}; (20.4,100); (20.4,110) **@{-}; 
   (130,100); (130,110) **@{-}; (130.4,100); (130.4,110) **@{-};
@i @={(60,20), (90,20), (90,30), (60, 30)},
s0="prev"  @@{;"prev";**@{-}="prev"};
@i @={(60,50), (90,50), (90,80), (60, 80)},
s0="prev"  @@{;"prev";**@{-}="prev"};
@i @={(60,100), (90,100), (90,110), (60, 110)},
s0="prev"  @@{;"prev";**@{-}="prev"};
(60.4,20); (60.4,30) **@{-}; (89.6,20); (89.6,30) **@{-}; 
(60.4,50); (60.4,60) **@{-}; (89.6,50); (89.6,60) **@{-}; 
(60.4,70); (60.4,80) **@{-}; (89.6,70); (89.6,80) **@{-}; 
(60.4,100); (60.4,110) **@{-}; (89.6,100); (89.6,110) **@{-}; 
\POS(45,25) \ar-(11,0);  \POS(105,25) \ar+(11,0);
\POS(45,65) \ar-(11,0);  \POS(105,65) \ar+(11,0);
\POS(45,105) \ar-(11,0);  \POS(105,105) \ar+(11,0);
\POS(20,45) \ar-(0,10);  \POS(75,45) \ar-(0,10);  \POS(130,45) \ar-(0,10);
\POS(20,85) \ar+(0,10);  \POS(75,85) \ar+(0,10);  \POS(130,85) \ar+(0,10);
\endxy
\label{3.3.2} \end{array} \end{equation}

	The degeneracies $ e_1U$, $ e_2U $ of a pre-collared 1-cospan $ U = (u; U^\al ) $ are defined as 
follows
%
    \begin{equation}
e_1U  =  (e_1u; eX^t , U^\be ),	\qq    e_2U  =  (e_2u; U^\al , eX^t ).
    \label{3.3.3}  \end{equation}

	In particular, $ e_1U $ has pre-collars forming the following two diagrams of pullbacks
%
    \begin{equation}
    \begin{array}{c}  \xymatrix  @C=10pt @R=20pt
{
~IX^-~   \ar[r]^e    \ar[d]_{Iu^-}  &  ~X^-~   \ar[d]^{u^-}  &
~IX^-~   \ar[l]_e  \ar[d]^{Iu^-}  &&
~IX^-~   \ar[r]^-{\id}    \ar[d]_{U^-}  &  ~IX^-~   \ar[d]^{U^-}  &
~IX^-~   \ar[l]_-{\id}  \ar[d]^{U^-}  
\\
~IX^0~   \ar[r]^e     &  ~X^0~  &  ~IX^0~   \ar[l]_e   &&
~X^0~   \ar[r]^-{\id}     &  ~X^0~  &  ~X^0~   \ar[l]_-{\id}   
&   \Bu    \ar[r]^1   \ar[d]^2  &
\\  
~IX^+~   \ar[r]_e    \ar[u]^{Iu^+}  &  ~X^+~   \ar[u]_{u^+}   &
~IX^+~   \ar[l]^e  \ar[u]_{Iu^+} &&
~IX^+~   \ar[r]_-{\id}    \ar[u]^{U^+}  &  ~IX^+~   \ar[u]_{U^+}   &
~IX^+~   \ar[l]^-{\id}  \ar[u]_{U^+} &
}
    \label{3.3.4} \end{array} \end{equation}

\subsection{Concatenating pre-collared 2-cospans}\label{3.4}

\begin{thm}
Pre-collared 2-cospans have well-defined 
concatenation in both directions, consistently with their underlying 2-cospans.
\end{thm}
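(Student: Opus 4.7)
The plan is to handle the direction-$1$ concatenation $W = U +_1 V$ in full, with the direction-$2$ case obtained by applying the transposition $s \colon \We^2 \to \We^2$ of \ref{1.4}. First, form the underlying $2$-cospan $w = |U| +_1 |V|$ inside the weak cubical category $\CCosp_*(\Top)$ of \ref{1.6}; by \ref{2.1}, every structure map of $w$ is a topological embedding because pushouts of topological embeddings are embeddings. Write $h^-_t \colon X^{0t} \to W^{0t}$ and $h^+_t \colon Y^{0t} \to W^{0t}$ for the canonical maps into the pushouts $W^{0t} = X^{0t} +_{X^{+t}} Y^{0t}$ ($t \in \We$), and analogously for $W^{00}$.

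Next, I would define the two families of pre-collars. In direction $1$, I generalise formula (\ref{2.5.2}) row by row, setting $W(1,-,t) := h^-_t \circ U(1,-,t)$ and $W(1,+,t) := h^+_t \circ V(1,+,t)$ for $t \in \We$. In direction $2$, the outer pre-collars are inherited, $W(2,-,\al) := U(2,-,\al)$ and $W(2,+,\al) := V(2,+,\al)$, and the central ones $W(2,0,\al) \colon IW^{0\al} \to W^{00}$ are constructed by the universal property of a pushout. This is possible because the cylinder functor $I = - \ti [0,1]$ is a left adjoint and therefore preserves pushouts, giving $IW^{0\al} = IX^{0\al} +_{IX^{+\al}} IY^{0\al}$; one then couples the maps $h^-_0 \circ U(2,0,\al)$ and $h^+_0 \circ V(2,0,\al)$. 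Their agreement on $IX^{+\al} = IY^{-\al}$ follows from the commutativity of the $U$- and $V$-crosses of (\ref{3.2.6}), the pushout identity $h^-_0 \circ u^{0\al} = h^+_0 \circ v^{0\al}$ computing $w^{0\al}$, and the face matching $\ddp_1 U = \ddm_1 V$ (which forces $U(2,+,\al) = V(2,-,\al)$).

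It then remains to verify the axioms of \ref{3.2} for $W$. That each $W(\cdots)$ restricts at cylinder coordinate $0$ to the corresponding edge map of $w$, and that the cross diagrams in both directions commute, are routine once one notes the component-wise naturality of $h^\pm$. The substantive check is the pullback axiom: each of the four squares in each direction for $W$ decomposes, via the canonical arm $h^\pm$, into (a) a pullback inherited from $U$ or $V$ (their own pre-collar square) and (b) a square of canonical pushout arms, which is a pullback by the second half of \ref{2.1} (pushouts of topological embeddings are pullbacks). The pasting lemma for pullbacks then yields the desired conclusion. The central direction-$2$ squares, which involve the newly constructed $W(2,0,\al)$ and the induced map $w^{0\al}$ between pushouts, require the same decomposition together with the preservation of pushouts by $I$. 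Consistency with the underlying structure, $|W| = |U| +_1 |V|$, is built in, since both concatenations are computed on the same choice of distinguished pushouts in $\Top$.

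The main obstacle is the last verification: showing that the squares involving the central pre-collars and the induced maps between pushouts remain pullbacks. Here one must leverage simultaneously the preservation of pushouts by $I$ and the bi-universal property of pushouts of topological embeddings given by \ref{2.1}. Once this central case is settled, the full statement in direction $1$ follows, and the direction-$2$ concatenation is obtained by transposing all data, concatenating in direction $1$, and transposing back, using that $s$ is a self-inverse cubical functor.
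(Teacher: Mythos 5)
Your construction follows the paper's route almost exactly: direction-$1$ pre-collars of $W$ obtained by composing those of $U$, $V$ with the pushout legs (generalising (\ref{2.5.2}) over $\Top^\We$), direction-$2$ outer pre-collars inherited, central ones induced by the universal property of the pushout using that the cylinder functor $I$, being a left adjoint, preserves pushouts, and the pullback axiom checked by pasting each new square as (old pre-collar pullback) $\circ$ (comparison square between the two pushouts). The direction-$2$ concatenation by transposition is also consistent with how the paper handles the general case in \ref{4.4}.

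There is, however, a genuine gap at the step you yourself single out as the main obstacle. You justify that the ``square of canonical pushout arms'' (e.g.\ the square with horizontal maps $x^{-}\c X^{0-}\to Z^{0-}$, $x^{0}\c X^{00}\to Z^{00}$ and vertical maps $u^{0-}$, $w^{0-}$, or its cylindrical analogue involving the new collar $W^{0-}$) is a pullback by appealing to \ref{2.1}. But \ref{2.1} says that a \emph{pushout square} of embeddings is also a pullback; the square in question is not a pushout square — it is the comparison square between two \emph{different} pushouts along one pair of legs — and it is \emph{not} a pullback in general. For instance, if the shared face over which $Z^{0-}$ is glued is empty while the one for $Z^{00}$ is not, a point of $Y^{0-}$ can land in $\Im(x^0)\subset Z^{00}$ without lying in $\Im(x^-)$. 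What makes the square a pullback here is the additional hypothesis that the corresponding square of $U$ (resp.\ $V$) over the shared face $A$ is itself a pullback — this is exactly the content of the paper's Lemma \ref{3.5}: in a cube of embeddings whose top square is a pushout and whose front and bottom squares are pullbacks, the back square is a pullback (proved by a diagram chase on injections). Your proposal never supplies this lemma or the input pullbacks it requires (which come from the pre-collar axiom (ii) of \ref{3.2}, restricted to cylinder coordinate $0$ where needed), so the central verification is unproved as written. The fix is exactly the paper's: isolate and prove the cube lemma, then apply it with front square taken from $U$ or $V$.
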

\begin{proof}
 It suffices to prove two points:
 
 \Ndt   (A) Families of pre-collars in direction 1 can be concatenated in direction 1,
 
\Ndt   (B) Families of pre-collars in direction 2 can be concatenated in direction 1.

\smallskip

	Point (A) is proved as in Thm. II.2.4, working in $ \Top^\We $ instead of $ \Top $ (cf. 
(\ref{3.2.1})). We only have to check that the new squares built by pushouts are indeed pullbacks. 
This is done in the following lemma, \ref{3.5}.

	For point (B), let us assume that $ u, v $ are equipped with pre-collars in direction 2, and prove that 
their concatenation $ w = u +_1 v $ can be canonically so equipped. Below, `in direction 2' is 
understood most of the time.

	Consider the cospan $ (h; H^-, H^+) = \ddp_1 u = \ddm_1 v $ along which the concatenation is computed.

	The 1-faces of $ w $ (whose maps are in direction 2) belong to $ u $ or $ v$  
    \begin{equation}
w(2, -, \al )  =  u(2, -, \al ),	 \qq   w(2, +, \al )  =  v(2, +, \al ),
    \label{3.4.1}  \end{equation}
and are already equipped with pre-collars $ (U(2, -, \al)) $ and $ (V(2, +, \al)).$

	Furthermore, we have a central cospan 
$ (w^{0-}, w^{0+}) = (w(2, 0, -), w(2, 0, +)) $ which is computed in 
the left diagram below, via three pushouts
    \begin{equation}
    \begin{array}{c}  \xymatrix  @C=3pt @R=10pt
{
& ~Y^{0-}~   \ar[rrr]^{y^-}    \ar@{-->}[dd]^<<{v^{0-}}  &&&  ~Z^{0-}~   \ar[dd]^{w^{0-}}
&&&
 ~IY^{0-}~   \ar[rrr]^{Iy^-}    \ar@{-->}[dd]^<<{V^{0-}}  &&&  ~IZ^{0-}~  \ar[dd]^{W^{0-}}
\\
~A^-~   \ar[rrr]^{~~~~u^-}   \ar[ru]^-{v^-}    \ar[dd]_{h^-}  &&&  
~X^{0-}~      \ar[ru]^-{x^-}  \ar[dd]^<<{u^{0-}}
&&&
~IA^-~   \ar[rrr]^{~~~~Iu^-}   \ar[ru]^-{Iv^-}    \ar[dd]_{H^-}  &&&  
~IX^{0-}~      \ar[ru]^-{Ix^-}  \ar[dd]^<<{U^{0-}}
\\  
& ~Y^{00}~   \ar@{-->}[rrr]^{y^0~~~~~}    &&&  ~Z^{00}~   
&&&
 ~Y^{00}~   \ar@{-->}[rrr]^{y^0~~~~~}    &&&  ~Z^{00}~   
\\
~A^0~   \ar[rrr]^-{~~~~u^0}    \ar@{-->}[ru]^-{v^0}  &&&  ~X^{00}~   \ar[ru]_-{x^0}
&&&
~A^0~   \ar[rrr]^-{~~~~u^0}    \ar@{-->}[ru]^-{v^0}  &&&  ~X^{00}~   \ar[ru]_-{x^0}
\\
& ~Y^{0+}~   \ar@{-->}[rrr]^{y^+~~~~~}    \ar@{-->}[uu]^<<{v^{0+}}  &&&  ~Z^{0+}~   \ar[uu]_{w^{0+}}
&&&
 ~IY^{0+}~   \ar@{-->}[rrr]^{Iy^+~~~~~}    \ar@{-->}[uu]^<<{V^{0+}}  &&&  ~IZ^{0+}~   \ar[uu]_{W^{0+}}
\\
~A^+~   \ar[rrr]_-{u^+}    \ar@{-->}[ru]^-{v^+}  \ar[uu]^{h^+}  &&&  
~X^{0+}~   \ar[uu]_>>>{u^{0+}}    \ar[ru]_-{x^+}
&&&
~IA^+~   \ar[rrr]_-{Iu^+}    \ar@{-->}[ru]^-{Iv^+}  \ar[uu]^{H^+}  &&&  
~IX^{0+}~   \ar[uu]_>>>{U^{0+}}    \ar[ru]_-{Ix^+}
}
    \label{3.4.2} \end{array} \end{equation}

 	Here, the notation of some maps is simplified:
$$  
		u^\al   =  u(1, \al , +),	\q	v^\al   =  v(1, \al , -),  \q	h^\al   =  u(2, +, \al )  =  v(2, -, \al ),
$$
and similarly for their collars, denoted by the corresponding capital letters.

	Now, for this last cospan $ (w^{0-}, w^{0+}). $ we have to construct pre-collars, 
consistently with the 
previous collars $ (U(2, -, -), U(2, -, +)) $ and $ (V(2, +, -), V(2, +, +)).$

	The cylinder functor $ I \c \Top \to \Top $ preserves pushouts, as a left adjoint. Therefore, the 
right diagram above allows us to define - consistently - the vertical arrows $ (W^{0-}, W^{0+}), $ on the 
basis of the consistent collars appearing in the other vertical arrows. They are also topological 
embeddings, and the new squares built in the right diagram above are pullbacks, again by 
\ref{3.5}.
\end{proof}	

\subsection{A diagrammatic lemma}\label{3.5}
We end this section with a diagrammatic property, which has already been used in the proof of the previous theorem. (In that case, the bottom square of the diagram below is a pushout of embeddings, which is also a pullback.)

\begin{lemma}
Let us suppose we have, in $ \Top$, a commutative cubical diagram of topological embeddings:
    \begin{equation}
    \begin{array}{c}  \xymatrix  @C=7pt @R=10pt
{
& ~Y~   \ar[rrr]   \ar@{-->}[dd]  &&&  ~Z~   \ar[dd]
\\
~A~   \ar[rrr]   \ar[ru]    \ar[dd]  &&&  ~X~      \ar[ru]  \ar[dd]
\\  
& ~Y'~   \ar@{-->}[rrr]   &&&  ~Z'~   
\\
~A'~   \ar[rrr]    \ar@{-->}[ru]  &&&  ~X'~   \ar[ru]
}
    \label{3.5.1} \end{array} \end{equation}

	If the front and bottom squares are pullbacks, and the top square is a pushout, then also the back 
square is a pullback.
\end{lemma}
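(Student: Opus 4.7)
The plan is to reduce the entire statement to an elementary set-theoretic identity among subspaces of $Z'$, using the hypothesis that every edge of the cube is a topological embedding. Since all twelve maps are embeddings, I can identify each of the eight vertices with its image and view everything as a subset of $Z'$ carrying the subspace topology. By \ref{2.1}, the top pushout is automatically also a pullback of embeddings, so the top face translates to the identification $Z = Y \cup X$ together with $Y \cap X = A$ inside $Z$. The front pullback translates to $A = X \cap A'$ inside $X'$, and the bottom pullback translates to $A' = Y' \cap X'$ inside $Z'$.

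Having set up this dictionary, I would form the canonical comparison map $\varphi \colon Y \to Y' \times_{Z'} Z$ supplied by the universal property of the pullback. Since the pullback of the embedding $Z \hookrightarrow Z'$ along $Y' \hookrightarrow Z'$ is again an embedding (\ref{2.1}) whose image is exactly the intersection $Y' \cap Z$ inside $Z'$, identifying $\varphi$ as a homeomorphism onto $Y' \times_{Z'} Z$ amounts to showing set-theoretically that $Y = Y' \cap Z$ as subsets of $Z'$; the topological part is then automatic, because $Y \hookrightarrow Y'$ and $Y' \times_{Z'} Z \hookrightarrow Y'$ are both embeddings with the same image.

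The inclusion $Y \subseteq Y' \cap Z$ is trivial. For the converse, take $w \in Y' \cap Z$. The top identity $Z = Y \cup X$ leaves two cases: if $w \in Y$ we are done; if $w \in X$, then, since $X \subseteq X'$ as subsets of $Z'$, the bottom pullback gives $w \in Y' \cap X' = A'$; combining $w \in X$ with $w \in A'$, the front pullback then yields $w \in X \cap A' = A$, and since $A \subseteq Y$ this closes the argument.

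I expect the main obstacle to lie not in any deep step but in bookkeeping: one has to be consistently clear about which ambient space each intersection is taken in, since the single point $w$ is chased through three different subspace identifications during the case analysis. Once the translation to inclusions of subspaces has been installed, no further point-set-topological input is needed beyond the standing remark that every inclusion in sight carries its subspace topology, and the role of the hypothesis that \emph{all} edges (not only some selected ones) are embeddings becomes transparent.
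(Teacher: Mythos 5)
Your argument is correct and is essentially the paper's proof in a different notation: both reduce to sets using the injectivity of all edges, both exploit that the pushout $Z$ is covered by the images of $X$ and $Y$, and your two-step chase through $A'$ (using $Y'\cap X'=A'$ and then $X\cap A'=A$) is exactly the paper's pasting of the front and bottom pullbacks. The only cosmetic remark is that the covering $Z=Y\cup X$ comes from the pushout being a quotient of the coproduct, not from its being a pullback, but nothing in your chase depends on that misattribution.
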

\begin{proof}
Since our maps are topological embeddings, we can forget topologies and work in the category of 
sets. The pasting of the front and bottom squares is a pullback. Therefore, the pasting of the top and 
back squares is also a pullback (the same). We have thus a commutative diagram of sets
    \begin{equation}
    \begin{array}{c}  \xymatrix  @C=10pt @R=10pt
{
~A~   \ar[rr]   \ar[dd]  &&  ~Y~   \ar[rr]   \ar[dd]   &&  ~Y'~    \ar[dd]
\\ 
&    \ar@{--}[r]   \ar@{--}[d] &
\\
~X~   \ar[rr]    &&   ~Z~   \ar[rr]   &&  ~Z'~  
}
    \label{3.5.2} \end{array} \end{equation}
where the left square is a pushout and the outer rectangle a pullback. Knowing that all maps are 
injective, it is quite easy to check, on elements, that the right square is also a pullback.
\end{proof}

\section{Collared cubical cospans}\label{4}

	We extend the previous constructions to unbounded cubical degree, constructing the symmetric weak 
cubical category of pre-collared cubical cospans $ \pCCc_*(\Top) $ and its weak cubical subcategory $ 
\CCc_*(\Top) $ of collared cubical cospans.
	
\subsection{Pre-collared cubical cospans}\label{4.1}
	We extend now the previous definitions (\ref{2.4}, \ref{3.2}) to 
higher cubical degree, using the notation for cubical cospans introduced in \ref{3.1}.

 	A {\it pre-collared n-cubical cospan} $ \; U = (u; U(i, \t)) \; $ consists of an $n$-cubical cospan $ u = 
|U| $ of topological embeddings, equipped with a family of $ 2n.3^{n-1} $ {\it pre-collars} $ U(i, \t) $ 
of its maps
    \begin{equation}
    \begin{array}{ll}
u  =  |U|  =  (X(\t), u(i, \t)),   \q&   U(i, \t) \c IX(\t) \to X(\t^{\sharp i} ),
\\[3pt]
u(i, \t)(x)  =  U(i, \t)(x, 0)  & 
(i = 1,..., n,  \; \t = (t_1,..., t_n) \in \We^n,  \; t_i \neq   0).
    \label{4.1.1}\end{array}
    \end{equation}

	Moreover, the following squares must (commute and) be {\it pullbacks} 
    \begin{equation}
    \begin{array}{c}  \xymatrix  @C=20pt @R=5pt
{
~IX(\t)~   \ar[rr]^{Iu(i, \t)}   \ar[dd]_{U(j, \t)}  &&  
~IX(\t^{\sharp i} )~    \ar[dd]^{U(j, \t^{\sharp i} )}
\\ 
 \ar@{--}[r]  &  \ar@{--}[u] && \qq   (i \neq  j, \; t_i \neq   0, \; t_j \neq   0).
\\
~X(\t^{\sharp i} )~   \ar[rr]_{u(j, \t^{\sharp j})}    &&   ~X(\t^{\sharp i \sharp j})~   
}
    \label{4.1.2} \end{array} \end{equation}

	Faces and degeneracies are defined as follows, forming a symmetric cubical set (consistently with 
the forgetful procedure $ | - |), $ where $ i^{\sharp j} $ is $ i $ if $ i < j, $ and $ i + 1 $ if $ i \ge j$
    \begin{equation}
    \begin{array}{ll}
\dda_j U  =  (\dda_j u, U(i^{\sharp j}, \dda_j \t))		&	(i  =  1,..., n-1;  \; \t \in \We^{n-1}),
\\[3pt]
e_j(U)  =  (e_ju, e_j(U)(i, \t)),
\\[3pt]
e_j(U)(i, \t)  =  U(i, e_j\t) \;\;\;  (i \neq  j),	\q &   e_i(U)(i, \t)  =  eX(\t) \c IX(\t) \to X(\t),
    \label{4.1.3}\end{array}
    \end{equation}
($ \dda_j \t$, $ e_j\t $ are defined in (\ref{1.4.5}).)

\subsection{A weak cubical category}\label{4.2}
	We can now form the symmetric weak cubical category $ 
\pCCc_*(\Top) $ of {\it pre-collared cubical cospans}.

	The operation of $i$-concatenation of pre-collared $n$-cubical cospans is defined as in \ref{3.4} 
for $ n = 2$: point (A) and (B) specify, respectively, how to construct the new collars in direction $ i $ 
and $ j \neq  i.$

	A {\it transversal map} of pre-collared $n$-cubical cospans, also called an $(n+1)$-cell,
    \begin{equation}
f \c U \to V,    \qq    U  =  (u, U(j, \t)),   \q   V  =  (v, V(j, \t)),
    \label{4.2.1}  \end{equation}
is a transversal map $ |f| \c |U| \to |V| $ of the underlying $n$-cubical cospans which commutes with the collars, via the cylindrical extensions
    \begin{equation}
F(\t)  =  If(\t) \c IX(\t) \to IY(\t)   \qq   (\t \in \We^n).
    \label{4.2.2}  \end{equation}

	Faces, degeneracies and transpositions of transversal maps are defined in the obvious way, 
consistently with domains and codomains. The comparisons for associativity and interchange derive 
from the comparisons of $ \CCosp_*(\Top )$.

	We have also defined a cubical forgetful functor
    \begin{equation}
| - | \c  \pCCc_*(\Top) \to \CCosp_*(\Top),
    \label{4.2.3}  \end{equation}
which is {\it transversally faithful}: given two transversal maps $ f, g \c U \to V, $ the condition $ |f| = 
|g| $ implies $ f = g.$	

\subsection{Cubical collared cospans}\label{4.3}
	 Let $ U = (u, U(i, \t)) $ be a pre-collared $n$-cubical cospans.

	We say that $ U $ is {\it collared in direction  i} $ (= 1,..., n) $ if the underlying cubical cospan $ 
u $ can be decomposed into a binary sum, coherently with its pre-collars
    \begin{equation}
    \begin{array}{ll}
u(i, \t)  =  u_0(i, \t) + u_1(i, \t),
\\[3pt]
u_j(i, \t) \c X_j(\t)  \to  X_j(\t^{\sharp i} )   \q   (j = 0, 1;  \;  t_i \neq  0),
    \label{4.3.1}\end{array}
    \end{equation}
    \begin{equation}
U(i, \t)(IX_0(\t))  \subset  X_0(\t^{\sharp i} ),    \q  U(i, \t)(IX_1(\t))  \subset  X_1(\t^{\sharp i} ),
    \label{4.3.2}  \end{equation}
so that the restrictions 
    \begin{equation}
U_0(i, \t) \c IX_0(\t) \to X_0(\t^{\sharp i} ),	\q  U_1(i, \t) \c IX_1(\t) \to X_1(\t^{\sharp i} ),
    \label{4.3.3}  \end{equation}
yield a collared decomposition of each ordinary cospan 
$ (u(i, \t); t_i = \pm ). $

 By \ref{2.6}, this means that:

\Ndt   (i) each cospan $ (u_0(i, \t); \; t_i = \pm ) $ {\it is a pair of homeomorphisms} with trivial collars $ U_0(i, \t) 
= u_0(i, \t).eX_0(\t) \c IX_0(\t) \to X_0(\t) \to X_0(\t^{\sharp i} )$;

\Ndt   (ii) the pair $ (U_1(i, \t); t_i = \pm ) $ is {\it a 1-collar of} $ (u_1(i, \t); \; t_i = \pm ), $ i.e. it consists of two 
disjoint closed embeddings (extending the maps $ u_1(i, \t)) $ and $ U_1(i, \t)(X_1(\t) \ti  [0, 1[) $ is 
open in $ X_1(\t^{\sharp i} )$.

	We say that $ U $ is {\it collared} if it is collared in every direction.
	 
\subsection{Concatenating collared cubical cospans} \label{4.4}
	 
\begin{thm}
Collared cubical co-spans are stable under 
concatenation in all directions, in $ \pCCc_*(\Top).$
\end{thm}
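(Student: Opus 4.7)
The plan is to establish Theorem 4.4 by separately handling the direction $i$ in which the concatenation takes place and the remaining directions $j\neq i$. Fix two collared $n$-cubical cospans $U,V$ that are consecutive in direction $i$, so $\ddp_i U = \ddm_i V$ as pre-collared cospans, and write $W = U +_i V$ for their concatenation, which is already a well-defined object of $\pCCc_*(\Top)$ by the extension of Thm. 3.4 carried out in \ref{4.2}. We must produce, for each $j=1,\dots,n$, a binary sum decomposition of $W$ that exhibits it as collared in direction $j$ in the sense of \ref{4.3}.

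For the case $j=i$, the argument is the cubical version of Thm. II.2.4. Using the direction-$i$ collared decompositions $U = U_0 + U_1$ and $V = V_0 + V_1$, the shared face $A = \ddp_i U = \ddm_i V$ inherits two clopen partitions whose common refinement splits $A$ into four clopen pieces, according to whether each of $U$ and $V$ is trivially collared or 1-collared there. The piece where both sides are trivial yields the trivially collared summand $W_0$, obtained by pushing out pairs of homeomorphisms; the union of the remaining three pieces supplies the 1-collared summand $W_1$, with its disjoint collar cospan built from the surviving pre-collars $U^-$ and $V^+$ exactly as in Thm. II.2.5. The pullback squares demanded by (\ref{4.1.2}) in the other directions survive the pushout by Lemma \ref{3.5}.

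For $j\neq i$, the key observation is that the cylinder functor $I\colon\Top\to\Top$ is a left adjoint and the concatenation in direction $i$ is computed by pushouts, so both preserve sums. Consequently the direction-$j$ collared decompositions $U = U_0^{(j)} + U_1^{(j)}$ and $V = V_0^{(j)} + V_1^{(j)}$, which agree on $A$ by the uniqueness of the irreducible collared decomposition (cf. \ref{2.6}), may be concatenated summand by summand to yield $W = W_0^{(j)} + W_1^{(j)}$. The summand $W_0^{(j)}$ remains a pair of homeomorphisms in direction $j$, since pushouts of homeomorphisms along identities are again homeomorphisms, while the pre-collars of $W_1^{(j)}$ in direction $j$ inherit the disjoint closed-embedding property from those of $U_1^{(j)}$ and $V_1^{(j)}$.

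The anticipated main obstacle is precisely the compatibility step in the last paragraph: one must verify that the pushouts defining $W$ respect the direction-$j$ binary splitting on $A$, and that the openness of the half-open cylinder images $U_1^\alpha(X\times[0,1[)$ is preserved in the pushout ambient $X_1(\t^{\sharp i})$. Both issues reduce, via Lemma \ref{3.5} and the preservation of sums and pushouts by $I$, to the two-dimensional verification already carried out in the proof of Thm. \ref{3.4}; the higher-dimensional coherence is then bookkeeping indexed by the remaining multi-index coordinates.
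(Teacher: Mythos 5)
Your overall architecture is the paper's: reduce by transpositions to a single concatenation direction, treat that direction by the cubical analogue of Thm.\ II.2.4, and treat the transverse directions by splitting along the collared decomposition of the shared face. But there are two gaps in the transverse-direction step, one of them serious.

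The serious one is disjointness of the new collars. In a direction $j\neq i$, the concatenation produces a genuinely \emph{new} central cospan $(w^{0-},w^{0+})$ whose collars $W^{0\al}$ are built by pushout from $U^{0\al}$ and $V^{0\al}$ (diagram (\ref{3.4.2})). You assert that these ``inherit the disjoint closed-embedding property'' and defer all remaining verification to Thm.\ \ref{3.4}. That deferral cannot work: Thm.\ \ref{3.4} is about \emph{pre}-collars, which are not required to be injective or to have disjoint images, so no disjointness verification exists there to invoke. Since $\Im(W^{0\al})=x^0(\Im U^{0\al})\cup y^0(\Im V^{0\al})$, you must rule out the cross-intersection $x^0(\Im U^{0-})\cap y^0(\Im V^{0+})$, and this is not formal: it is exactly where the pullback hypothesis (\ref{4.1.2}) on the squares around the collars $H^\al$ of the shared face is needed. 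One uses that the pushout square $(u^0,v^0,x^0,y^0)$ is also a pullback (being a pushout of embeddings) to get $x^0(\Im U^{0-})\cap \Im y^0 = x^0(\Im U^{0-}\cap\Im u^0)=\Im(x^0u^0H^-)$, and symmetrically $\subset\Im(y^0v^0H^+)$; since $x^0u^0=y^0v^0$ and the collars $H^-$, $H^+$ are disjoint, the cross-intersection is empty. Without this argument the theorem is not proved, because an arbitrary pushout of cospans with disjoint collars can easily glue the two collars together if the compatibility on the shared face is not controlled.

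The secondary gap is the degenerate case. Your matching of the direction-$j$ decompositions of $U$ and $V$ ``on $A$'' via uniqueness of the irreducible collared decomposition fails precisely when the shared face, viewed in direction $j$, is the empty cospan $e_1(\es)$: the empty cospan is both $0$-collarable and $1$-collarable, so the two splittings need not correspond, and one may have to concatenate a $0$-collared summand of $U$ with a $1$-collared summand of $V$. The paper treats this separately (point (B$''$)): the concatenation over the empty face is just a disjoint sum, which decomposes as a $0$-collared part plus a $1$-collared part and is therefore still collared. You should add this case, or at least note why it does not break the summand-by-summand pairing.
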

\begin{proof}
Since transpositions permute directions, it suffices to consider a concatenation $ W = U +_1 V 
$ in direction 1 and prove that:

\Ndt  (A) The new cospans in direction 1 produced by 1-concatenation are collared in direction 1,

\Ndt  (B) The same are collared in direction 2.

\smallskip

	Point (A) is proved as in Thm. II.2.4, working in $ \Top^{\We^{n-1}} $ instead of $ \Top.$

	For point (B), we use the notation of \ref{3.4}, {\it and we only write the indices in 
directions} 1, 2 (which amounts to working in $ \Top^{\We^{n-2}}). $ Collared will mean: collared in 
direction 2.

	Consider the cospan $ H = (h; H^-, H^+) = \ddp_1U = \ddm_1V $ 
along which the concatenation is computed, with collar-pair
    \begin{equation}
H^- \c IA^- \to A^0 \lw IA^+ \cc H^+,	\qq   H  =  H_0 + H_1.
    \label{4.4.1}  \end{equation}

\Ndt   (B$'$) First, let us consider the case $ h \neq  e_1(\es)$, so that in the collared decomposition 
$ h = h_0 + h_1$ both components are non trivial.  Accordingly, we can split the concatenation $ w $ into the sum
    \begin{equation}
w  =  w_0 + w_1  =  (u_0 +_1 v_0) + (u_1 +_1 v_1),
    \label{4.4.2}  \end{equation}
of the concatenations of the 0- and 1-collared parts (in direction 2). Plainly, the first component is 0-collared. 
We can forget it, assuming that $ u, v $ are 1-collared and prove that $ w = u +_1 v $ is also.

	We proceed now as in the proof of 3.4, point (B). The 1-faces of  $w$  (see (\ref{3.4.1})) are already 1-collared, and we construct the new pre-collars  $ (W^{0-}, W^{0+})$  as in (\ref{3.4.2}). We end point (B$'$) proving that the latter are indeed 1-collars.
	
	Some lengthy calculations are needed to show that they are disjoint; essentially, this depends not only on the fact that the old collars are disjoint, but also on the 
pullback-hypothesis (\ref{4.1.2}). First, since the top and bottom squares are pushouts
    \begin{equation}
\Im(W^{0\al})  =  \Im(W^{0\al}.Ix^\al ) \cup Im(W^{0\al}.Iy^\al )  =  x^0(\Im U^{0\al}) \cup y^0(\Im V^{0\al}).    \label{4.4.3}  \end{equation}
	We have thus to consider four intersections; by symmetry, it is sufficient to prove that:
    \begin{equation}
x^0(\Im U^{0-}) \cap x^0(\Im U^{0+})  =  \es, \qq  
x^0(\Im U^{0-}) \cap y^0(\Im V^{0+})  =  \es.
    \label{4.4.4}  \end{equation}

	The first fact is obvious, because $ x^0 $ is injective and the collars $ U^{0\al}  $ are disjoint. For the 
second, we will use the fact that the square of $ (u^0, v^0, x^0, y^0) $ is a pullback (as a pushout of 
embeddings) and the square around $ H^- $ is also (by hypothesis). Therefore:
$$
x^0(\Im U^{0-}) \cap y^0(\Im V^{0+})  \;\;  \subset   \;\;  x^0(\Im U^{0-}) \cap \Im y^0
$$   $$
= x^0(\Im U^{0-} \cap \Im u^0)  =  x^0(\Im (u^0H^-))  =  \Im (x^0 u^0 H^-),
$$
and symmetrically
$$
x^0(\Im U^{0-}) \cap y^0(\Im V^{0+})  \subset  \Im (y^0v^0H^+).
$$
	We conclude noting that $ x^0 u^0 = y^0 v^0, $ and that the collars $ H^\al  $ are disjoint.

\Ndt   (B$''$) Finally, let us examine the degenerate case $ h = e_1(\es). $ If, {\it in direction 2}, the cospans $ u 
$ and $ v $ are both 0-collared or both 1-collared, we come back to the previous argument. But here 
one of them can be 0-collared, say $ u, $ and the other 1-collared. Then the cospan $ (w^{0-}, w^{0+})$  is the sum
    \begin{equation}
    \begin{array}{c}  \xymatrix  @C=7pt @R=10pt
{
& ~Y^{0-}~   \ar[rrr]^{y^-}    \ar@{-->}[dd]^<<{v^{0-}}  &&&  ~Z^{0-}~   \ar[dd]^{w^{0-}}
\\
~\es~   \ar[rrr]   \ar[ru]    \ar[dd]  &&&  
~X^{0-}~      \ar[ru]_-{x^-}  \ar[dd]^<<{u^{0-}}
\\  
& ~Y^{00}~   \ar@{-->}[rrr]^{y^0~~~~~}    &&&  ~Z^{00}~   
\\
~\es~   \ar[rrr]    \ar@{-->}[ru]  &&&  ~X^{00}~   \ar[ru]_-{x^0}
\\
& ~Y^{0+}~   \ar@{-->}[rrr]^{y^+~~~~~}    \ar@{-->}[uu]^<<{v^{0+}}  &&&  ~Z^{0+}~   \ar[uu]_{w^{0+}}
\\
~\es~   \ar[rrr]    \ar@{-->}[ru]  \ar[uu]  &&&  
~X^{0+}~   \ar[uu]_>>>{u^{0+}}    \ar[ru]_-{x^+}
}
    \label{4.4.5} \end{array} \end{equation}
\smallskip

\ndt  of a 0-collared component $ (u^{0-}, u^{0+}) $ and a 1-collared component 
$ (v^{0-}, v^{0+}), $ which means that it is collared.
\end{proof} 

\subsection{The structure of collared cubical cospans}

We can now define the {\it transversally full} weak cubical subcategory of {\it 
collared cubical cospans}
    \begin{equation}
\CCc_*(\Top)  \;  \subset  \;  \pCCc_*(\Top).
    \label{4.5.1}  \end{equation}

	Its $n$-cubes are the collared $n$-cospans, as defined above. Its transversal maps are all the 
natural transformations $ f \c u \to u' \c \We^n \to \Top $ between collared $n$-cospans. 

These data 
are plainly closed under faces and degeneracies. They are also closed under concatenation in any 
cubical direction, as proved in \ref{4.4}. The comparisons of the weak structure 
are inherited from $ \pCCc_*(\Top), $ since they are (invertible special) transversal maps - and we are 
taking all of them between collared cubes.

\section{Cylindrical degeneracies and cylindrical concatenation}\label{5}
	We define a new framework, $ \CCOSP_*(\Top), $ from which we will abstract the notion of a 
symmetric {\it quasi} cubical category (Section 7).

\subsection{Comments}\label{5.1}
	We come back to arbitrary topological cospans and begin a different construction, 
which does not use collars but {\it cylindrical} degeneracies and {\it cylindrical} concatenations (by 
homotopy pushouts). Notice that the latter grants, by itself, a homotopy-invariant concatenation.

	With respect to the symmetric weak cubical category $ \CCosp_*(\Top), $ the new structure 
$\CCOSP_*(\Top) $ differs with respect to degeneracies, concatenations and comparisons, and 
satisfies weaker axioms: degeneracies behave now in a weaker way and - for instance - are just {\it lax} 
identities. We get thus a symmetric {\it quasi} cubical category, as defined in Section 7. (The 
importance of weak units in homotopy theory is discussed in \cite{Ko} and references therein.)

	With respect to $ \CCc_*(\Top), $ the new framework is simpler and can easily be restricted to {\it 
manifolds with faces} and their cobordisms (see the next section). This is actually the main reason for 
using here cylindrical degeneracies instead of the ordinary ones - which would also give lax identities, 
with respect to cylindrical concatenation.

\subsection{Cylindrical degeneracies}\label{5.2}
	Let us come back to the symmetric weak cubical category $ 
\CCosp_*(\Top), $ recalled in Section 1.

	After the ordinary degeneracy $ e_1(X) = (\id X \c X \to IX \lw X \cc \id  X) $ of a space $ X, $ we 
also have a {\it cylindrical degeneracy}:
    \begin{equation}
E_1(X)  =  (\dm \c X \to IX \lw X \cc \dpp),	\q  \dm(x)  =  (x, 0),  \;\;    \dpp(x)  =  (x, 1).
    \label{5.2.1}  \end{equation}
	One degree up, a cospan $ u = (\um \c \Xm \to \Xz \lw \Xp \cc \up) $ has two cylindrical 
degeneracies, $ E_1(u) $ and $ E_2(u) = E_1(u).s $ (with $ \dda_i E_i(u) = u)$

\bigskip
 \xymatrix  @C=18pt @R=25pt
{
& ~X^-~   \ar[r]^-{d^-X^-}    \ar[d]_{u^-}  &  ~IX^-~   \ar[d]^{Iu^-}  &
~X^-~   \ar[l]_-{d^+X^-}  \ar[d]^{u^-}  &
~X^0~   \ar[r]^-{u^-}    \ar[d]_{d^-X^-}  &  ~X^0~   \ar[d]^{d^-X^0}  &
~X^+~   \ar[l]_-{~u^+}  \ar[d]^{d^-X^+}  
\\
& ~X^0~   \ar[r]^-{d^-X^0}     &  ~IX^0~  &  ~X^0~   \ar[l]_-{d^+X^0}   &
~IX^-~   \ar[r]^-{Iu^-}     &  ~IX^0~  &  ~IX^+~   \ar[l]_-{Iu^+}   
&   \Bu    \ar[r]^1   \ar[d]^2  &
\\  
& ~X^+~   \ar[r]_-{d^-X^+}    \ar[u]^{u^+}  &  ~IX^+~   \ar[u]_{Iu^+}   &
~X^+~   \ar[l]^-{d^+X^+}   \ar[u]_{~u^+} &
~X^-~   \ar[r]_-{u^-}    \ar[u]^{d^+X^-}   &  ~X^0~   \ar[u]_{d^+X^0}    &
~X^+~   \ar[l]^-{~u^+}  \ar[u]_{d^+X^+}  &
}
\smallskip
    \begin{equation}
\qq    E_1(u)   \qqq \q  E_2(u) =  E_1(u).s.   \qq
    \label{5.2.2}  \end{equation}

	While ordinary degeneracies satisfy the cubical relation $ e_1e_1 = e_2e_1 $ (\ref{1.1}), the 2-cospans
 $ E_1E_1(X) $ and $ E_2E_1(X) = E_1E_1(X).s $ are different, since $ E_1E_1(X) $ is not symmetric
	
%
    \begin{equation}
    \begin{array}{c}  \xymatrix  @C=27pt @R=25pt
{
~X~   \ar[r]^-{d^-}    \ar[d]_{d^-}  &  ~IX~   \ar[d]^{Id^-}  &
~X~   \ar[l]_-{d^+}  \ar[d]^{d^-}    
\\
~IX~   \ar[r]^-{d^-I}     &  ~I^2X~  &  ~IX~   \ar[l]_-{d^+I}      
&&   \Bu    \ar[r]^1   \ar[d]^2  &
\\  
~X~   \ar[r]_-{d^-}    \ar[u]^{d^+}  &  ~IX~   \ar[u]_{Id^+}   &
~X~   \ar[l]^-{d^+}   \ar[u]_{~d^+}   &&
}
    \label{5.2.3} \end{array} \end{equation}
\smallskip

	However, the transposition symmetry $ s \c I^2 \to I^2 $ induces an invertible special comparison
    \begin{equation}
\si _1X \c E_1E_1(X) \to E_2E_1(X)	\qq   (E_2E_1 = s_1E_1E_1),
    \label{5.2.4}  \end{equation}
which replaces the ordinary cubical relations for degeneracies.

	In general, an $n$-cospan $ u = (X(\t), u(j, \t)) $ (with the notation of \ref{3.1}) has an $i$-directed 
{\it cylindrical} degeneracy (for $ i = 1,..., n), $ which is an $(n+1)$-cospan:
%
    \begin{equation}
    \begin{array}{ll}
E_i(u)  =  (X(e_i \t), E_i(u)(j, \t)),	   &   (j = 1,..., n+1; \;  \t \in \We^{n+1}), 
\\[3pt]
E_i(u)(j, \t)  =  u(j^{\sharp i} , e_i \t)   &   (j \neq  i),
\\[3pt]
E_i(u)(i, \t)  =  d^\al X(e_i \t) \c  X(e_i \t) \to IX(e_i \t)     \;  & (\al  = t_i \neq  0),
    \label{5.2.5}\end{array}
    \end{equation}
where $ e_i \c \We^{n+1} \to \We^n $ omits the $i$-th coordinate (see (\ref{1.4.5}))  and we let:
    \begin{equation}
j^{\sharp i}   =  j  \;\; (for \; j \le i), \qq   j^{\sharp i}   =  j - 1  \;\; (for \;  j > i).
    \label{5.2.6}  \end{equation}

	Finally, we have an invertible special {\it symmetry comparison for cylindrical degeneracies}. For 
every $n$-cube $ u, $ we have an invertible special $(n+2)$-map $ \si _1(u), $ which is natural on 
$n$-maps and has the following faces
    \begin{equation}
    \begin{array}{ll}
\si _1u \c E_1E_1(u) \to E_2E_1(u)   &  \text{({\it symmetry 1-comparison}),}
\\[3pt]
\dda_1 \si_1(u)  =  \dda_2 \si_1(u)  =  \id (E_1u),   \q  &
\dda_{j+2} \si_1(u)  =  \si_1(\dda_j u).
    \label{5.2.7}\end{array}
    \end{equation}

	Via transpositions, $\si_1$ generates all the other symmetry comparisons $ E_j.E_i \to E_{i+1}.E_j $   $ (j \le i)$.

\subsection{Weak equivalences and homotopy invariance}\label{5.3}
Extending a previous definition on ordinary 
topological cospans (II.2.8), we say that a transversal $n$-map $ f \c u \to v $ between cubical 
topological cospans is a {\it weak equivalence} if it is special (\ref{1.6}) and all its components are 
homotopy equivalences (in $ \Top). $

	Thus, for every cubical cospan $ u = (X(\t), u(j, \t)), $ we have an obvious weak equivalence
    \begin{equation}
	p_i(u) \c E_i(u) \to u,
    \label{5.3.1}  \end{equation}
consisting of  the special transversal map whose non-identity components are the projections $ IX(e_i \t) \to X(e_i \t)$.  In cubical degree 1, 
$ p_1 \c E_1(X) \to e_1(X) $ has components $ (1, eX, 1)$
%
    \begin{equation}
    \begin{array}{c}  \xymatrix  @C=27pt @R=25pt
{
~X~   \ar[r]^-{d^-}    \ar@{=}[d]  &  ~IX~   \ar[d]^e  &
~X~   \ar[l]_-{d^+}   \ar@{=}[d]
\\
~X~   \ar[r]_-{\id}     &  ~X~  &  ~X~   \ar[l]^-{\id}      
}
    \label{5.3.2} \end{array} \end{equation}

	We have already seen that such maps can not be considered as `homotopy equivalences' in $ 
\CCosp(\Top), $ because there are no transversal maps backwards (II.1.6).

	We say that two $n$-cubical topological cospans $ u, v $ are {\it weakly equivalent} if there exists 
a finite sequence of weak equivalences connecting them: $ u \to u_1 \lw u_2 \to ... \to u_n = v. $ Then, 
they must have the same vertices: $ u(\al _1,..., \al _n) = v(\al _1,..., \al _n), $ for $ \al _i = \pm .$

	A weak double functor $ F \c \CCosp_*(\Top) \to \AA, $ with values in an arbitrary weak cubical 
category, will be said to be {\it homotopy invariant} if: 

\Ndt   (i) it sends weak equivalences $ f \c u \to v $ between topological cospans to invertible (special) cells 
of $ \AA, $ and therefore weakly equivalent $n$-cubical cospans to isomorphic $n$-cubes of $ \A.$

\subsection{Review of homotopy pushouts}\label{5.4}
	The new concatenations will use a fundamental notion of 
homotopy theory, introduced by Mather \cite{Ma} (and also used in Part II).

	Let $ f \c A \to X$, $ g \c A \to Y $ form a span in $ \Top. $ The {\it standard homotopy pushout} 
{\it from} $ f $ {\it to} $ g $ is a four-tuple $ (P; u, v; \la) $ as in the left diagram below, where $ \la \c 
uf \to vg \c A \to P $ is a homotopy satisfying the following universal property (as for {\it cocomma 
squares} of categories), which determines the solution {\it up to homeomorphism}
%
    \begin{equation}
    \begin{array}{l}  \xymatrix  @C=10pt @R=7pt
{
&&& \qq &&&  A    \ar[rr]^g   \ar[dd]_\dpp   &&   Y  \ar[dddd]^v
\\
A    \ar[rr]^g   \ar[dd]_f   &&   Y  \ar[dd]^v
\\
&&&&  A    \ar[rr]^\dm   \ar[dd]_f   &&   IA  \ar[rrdd]^\la
\\
X \ar[rr]_u &   \ar@/^/[ru]^\la   &	P	 &&&&&&
\\
&&&&  X   \ar[rrrr]_u   &&&&	P	
}
    \label{5.4.1} \end{array} \end{equation}

\Ndt   (a)  for every homotopy $ \la' \c u'f \to v'g \c A \to W, $ there is precisely one map $ h \c P \to W $ such that $ u' 
= hu$, $ v' = hv$, $ \la' = h\la$.

	(Writing $ h\la $ we are using the obvious {\it whisker composition} of homotopies and maps.) In 
$ \Top, $ the solution always exists and can be constructed as the ordinary colimit of the right-hand 
diagram above. This construction is based on the cylinder $ IA = A \ti  [0, 1] $ and its faces
    \begin{equation}
\dm, \dpp \c A \to IA,	\q	\dm(a)  =  (a, 0),    \;\;\;  \dpp(a)  =  (a, 1)   \qq  (a \in A).
    \label{5.4.2}  \end{equation}

	Therefore, the space $ P $ is a pasting of $ X $ and $ Y $ with the cylinder $ IA, $ and can be 
realised as a quotient of their topological sum, under the equivalence relation which gives the following 
identifications:
    \begin{equation}
P  =  (X + IA + Y)/\sim,	\qq   [f(a)]  =  [a, 0], \;\;\; [g(a)]  =  [a, 1]	  \qq   (a \in A).
    \label{5.4.3}  \end{equation}

	The term `standard homotopy pushout' will generally refer to this particular construction. Notice 
that, if $ f $ and $ g $ are topological embeddings, the spaces $ 
X $ and $ Y $ are embedded in $ P$.

	As a crucial feature, this construction always has strong properties of homotopy invariance (e.g., 
see \cite{G1}, Section 3), which an ordinary pushout need not have. Notice also that the cylinder $ IA 
$ is itself the standard homotopy pushout from $ \id A $ to $ \id A.$

\subsection{Cylindrical concatenation}\label{5.5}
	By definition, the {\it cylindrical i-concatenation} $ u \te _i v $ of 
$i$-consecutive $n$-cospans is computed on the $i$-concatenation model $ \We^{ni}_2 $ (of 
(\ref{1.4.6})), as 
    \begin{equation}
u \te_i v  =  [[u, v]].k_i \c \We^n \to \We^{ni}_2  \to \Top   \q  (\ddp_i (u) = \ddm_i (v)),
    \label{5.5.1}  \end{equation}
where $ [[u, v]] \c \We^{ni}_2 \to \Top $ sends all distinguished pushouts into standard homotopy 
pushouts (and, obviously, restricts to $u$ and $v$ on $ k^\al_i \c \We^n \to \We^{ni}$).

	(One could further formalise this by introducing the category $ {\rm hpt}\Cat $ of {\it h-categories} with 
distinguished homotopy pushouts, where an h-category - or a {\it category with homotopies} - is a 
category enriched on reflexive graphs with a suitable monoidal structure, as defined in \cite{G1}.)

\subsection{Comparisons for identities, associativity and interchange}\label{5.6}
The new structure $ 
\CCOSP_*(\Top) $ comes with various comparison maps, which make it a {\it symmetric quasi 
cubical category}, according to a definition which can be found in the last section.

\Ndt   (a) First, there are {\it lax} comparisons for identities, which are generally not invertible (but weak 
equivalences)
    \begin{equation}
\la_i u \c  E_i(\ddm_i u) \te _i u  \to u,	  \qq    \rho_iu \c  u \te _i E_i(\ddp_i u)  \to u.
    \label{5.6.1}  \end{equation}
	They are defined in the obvious way: for $ \la_i u, $ one collapses to its basis the {\it two} cylinders 
on $ \ddm_i u $ which we have pasted with $ u $ (i.e., the one appearing in $ E_i(\ddm_i u) $ and the 
one produced by the cylindrical concatenation $ \te _i$).

	Notice that $ E_i(X) \te _i E_i(X) \iso E_i(X), $ {\it but} $ \la E_i(X) $ and $ \rho E_i(X) $ are 
different and not invertible: they collapse different parts of the resulting cylinder on $ X. $ On the 
other hand, ordinary degeneracies work even worse (with homotopy pushouts): $ e_i(X) \te _i e_i(X) \iso 
E_i(X), $ which is only weakly equivalent to $ e_i(X).$

(b) Second, the cubical relation for pure degeneracies (which, in the presence of transpositions, can be 
reduced to the identity $ e_1.e_1 = e_2.e_1) $ does not hold. It is replaced with an invertible {\it 
symmetry comparison}, defined in \ref{5.2}
    \begin{equation}
    \begin{array}{ll}
\si _1u \c E_1E_1(u) \to E_2E_1(u),
\\[3pt]
\dda_1 \si _1(u)  =  \dda_2\si _1(u)  =  \id (E_1u), \q &
\dda_{j+2} \; \si _1(u)  =  \si _1(\dda_j u),
    \label{5.6.2}\end{array}
    \end{equation}
which, via transpositions, generates all the other ones, $ E_j.E_i \to E_{i+1}.E_j $$ (j \le i)$.

\Ndt   (c) Associativity of cylindrical concatenations works up to isomorphism (as in the weak cubical case):
    \begin{equation}
\ka_i(u, v, w) \c u \te _i (v \te _i w) \to (u \te _i v) \te _i w.
    \label{5.6.3}  \end{equation}

	This is expressed by the following computation (for ordinary cospans):
    \begin{equation}
	[\Xz + IA + [\Yz + IB + Z0]]  \iso  [\Xz + IA + \Yz + IB + Z^0]  \iso  [[\Xz + IA + \Yz] + IB + Z^0],
    \label{5.6.4}  \end{equation}
where the brackets $ [...] $ stand for a quotient modulo the adequate equivalence relations.

\Ndt   (d) Middle-four interchange also works up to isomorphism
    \begin{equation}
\chi _1(x, y, z, u) \c  (x \te _1 y) \te _2 (z \te _1 u)  \to  (x \te _2 z) \te _1 (y \te _2 u) \c \We^2 \to \Top.
    \label{5.6.5}  \end{equation}
	Indeed, the following diagram of pastings shows that both quaternary operations above are isomorphic to a 
symmetric one, denoted as $ \te _{12}(x, y, z, u)$

\bigskip

    \xymatrix  @C=15pt @R=20pt
{
\Bu   \ar@{-}[r]    \ar@{-}[d]   \ar@{}[rd]|x  &
\Bu   \ar@{-}[r]    \ar@{-}[d]   \ar@{}[rd]|{E_1a}  &
\Bu   \ar@{-}[r]    \ar@{-}[d]   \ar@{}[rd]|y  &
\Bu    \ar@{-}[d]  &
\Bu   \ar@{-}[r]    \ar@{-}[d]   \ar@{}[rd]|x  &
\Bu   \ar@{-}[r]    \ar@{-}[d]   \ar@{}[rd]  &
\Bu   \ar@{-}[r]    \ar@{-}[d]   \ar@{}[rd]|y  &
\Bu    \ar@{-}[d]  &
\Bu   \ar@{-}[r]    \ar@{-}[d]   \ar@{}[rd]|x  &
\Bu   \ar@{-}[r]    \ar@{-}[d]   \ar@{}[rd]|{E_1a}  &
\Bu   \ar@{-}[r]    \ar@{-}[d]   \ar@{}[rd]|y  &
\Bu    \ar@{-}[d]  &
\\ 
\Bu   \ar@{-}[r]   \ar@{-}[d]    &
\Bu   \ar@{-}[r]   \ar@{}[rd]|{E_2h}  &
\Bu   \ar@{-}[r]    &
\Bu    \ar@{-}[d]  &
\Bu   \ar@{-}[r]    \ar@{-}[d]   \ar@{}[rd]|{E_2c}  &
\Bu                       \ar@{-}[d]   \ar@{}[rd]|{E_2h'}  &
\Bu   \ar@{-}[r]    \ar@{-}[d]   \ar@{}[rd]|{E_2d}  &
\Bu    \ar@{-}[d]  &
\Bu   \ar@{-}[r]    \ar@{-}[d]   \ar@{}[rd]|{E_2c}  &
\Bu   \ar@{-}[r]    \ar@{-}[d]   \ar@{}[rd]|w  &
\Bu   \ar@{-}[r]    \ar@{-}[d]   \ar@{}[rd]|{E_2d}  &
\Bu    \ar@{-}[d]  &
\\ 
\Bu   \ar@{-}[r]    \ar@{-}[d]   \ar@{}[rd]|z  &
\Bu   \ar@{-}[r]    \ar@{-}[d]   \ar@{}[rd]|{E_1b}  &
\Bu   \ar@{-}[r]    \ar@{-}[d]   \ar@{}[rd]|u  &
\Bu    \ar@{-}[d]  &
\Bu   \ar@{-}[r]    \ar@{-}[d]   \ar@{}[rd]|z  &
\Bu                       \ar@{-}[d]   \ar@{}[rd]  &
\Bu   \ar@{-}[r]    \ar@{-}[d]   \ar@{}[rd]|u  &
\Bu    \ar@{-}[d]  &
\Bu   \ar@{-}[r]    \ar@{-}[d]   \ar@{}[rd]|z  &
\Bu   \ar@{-}[r]    \ar@{-}[d]   \ar@{}[rd]|{E_1b}  &
\Bu   \ar@{-}[r]    \ar@{-}[d]   \ar@{}[rd]|u  &
\Bu    \ar@{-}[d]  &
\\ 
\Bu   \ar@{-}[r]   &   \Bu   \ar@{-}[r]   &   \Bu   \ar@{-}[r]   &   \Bu   &
\Bu   \ar@{-}[r]   &   \Bu   \ar@{-}[r]   &   \Bu   \ar@{-}[r]   &   \Bu   &
\Bu   \ar@{-}[r]   &   \Bu   \ar@{-}[r]   &   \Bu   \ar@{-}[r]   &   \Bu   &
}
    \begin{equation}
\;\:\;\:  (x \te _1 y) \te _2 (z \te _1 u)  \qq\;\:
(x \te _2 z) \te _1 (y \te _2 u)  \qq  \te _{12}(x, y, z, u). 
    \label{5.6.6}  \end{equation}

	Above, we have written: 
    \begin{equation}
    \begin{array}{ccc}
a  =  \ddp_1x  =  \ddm_1y,	   &   b  =  \ddp_1z  =  \ddm_1u,   &   h  =  \ddp_2x \te _1 \ddp_2y,
\\[3pt]
c  =  \ddp_2x  =  \ddm_2z,	  &   d  =  \ddp_2y  =  \ddm_2u,	  &   h'  =  \ddp_1x \te _1 \ddp_1z,
\\[3pt]
&  w  =  E_1E_1(v)  \iso  E_2E_1(v),
    \label{5.6.7}\end{array}
    \end{equation}
where $ v = \ddp_1\ddp_2x $ is the $(n-2)$-cospan common to the four given items $ x, y, z, u.$

	The symmetric property of the operation $ \te _{12} $ is:
    \begin{equation}
\te _{12}(x, y, z, u).s_1  =  \te _{12}(xs_1, zs_1, ys_1, us_1).
    \label{5.6.8}  \end{equation}

\Ndt  (e) Finally, we have an invertible {\it nullary interchange comparison}, for 1-consecutive $n$-cubes $ x, y $ 
    \begin{equation}
\io_1(x, y) \c  E_1(x) \te _2 E_1(y)  \to  E_1(x \te _1 y).
    \label{5.6.9}  \end{equation}

	It can be constructed using the isomorphic construction of $ E_1(x \te _1 y) $ displayed below, and 
the symmetry isomorphism (\ref{5.6.2}) (again, we write $ a = \ddp_1x = \ddm_1y)$
    \begin{equation}
    \begin{array}{c}      \xymatrix  @C=1pt @R=20pt
{
\Bu   \ar@{-}[rr]    \ar@{-}[d]_x   \ar@{}[rrd]|{E_1x}  &&
\Bu    \ar@{-}[d]^x  &\qq&
\Bu   \ar@{-}[rr]    \ar@{-}[d]_x   \ar@{}[rrd]|{E_1x}  &&
\Bu    \ar@{-}[d]^x  &&
\\ 
\Bu   \ar@{-}[rr]   \ar@{-}[d]_{E_1a}     \ar@{}[rrd]|{E_2E_1a} &&
\Bu    \ar@{-}[d]^{E_1a}   &&
\Bu   \ar@{-}[rr]    \ar@{-}[d]_{E_1a}    \ar@{}[rrd]|{E_1E_1a}  &&
\Bu    \ar@{-}[d]^{E_1a}  &\qq&
\Bu   \ar[rrr]^1    \ar[d]^2  &&&
\\ 
\Bu   \ar@{-}[rr]    \ar@{-}[d]_y   \ar@{}[rrd]|{E_1x}  &&
\Bu    \ar@{-}[d]^y  &&
\Bu   \ar@{-}[rr]    \ar@{-}[d]_y   \ar@{}[rrd]|{E_1y}  &&
\Bu    \ar@{-}[d]^y  && &&
\\ 
\Bu   \ar@{-}[rr]   &&   \Bu   &&
\Bu   \ar@{-}[rr]   &&   \Bu   &&
\\
&  E_1(x) \te _2 E_1(y)   &&&&   E_1(x \te _1 y)  
}
    \label{5.6.10} \end{array} \end{equation}
\smallskip

\subsection{Cylindrical collared degeneracies}\label{5.7}
	We end this section by remarking that a sort of `cylindrical 
degeneracies' also exist for collared cospans, in $ \CCc_*(\Top).$

	Beginning at cubical degree 0, every space $ X $ has a {\it cylindrical collared degeneracy} (not to 
be confused with the cylindrical degeneracy $ E_1(X) $ considered above)
    \begin{equation}
    \begin{array}{l}
\E_1(X)  =  (X, IX, X;  \;\;  E^- \c IX \to IX \lw IX \cc E^+),
\\[3pt]
E^-(x, t)  =  (x, t/3),   	\qq   E^+(x)  =  (x, 1-t/3).
    \label{5.7.1}\end{array}
    \end{equation}

	Then, every collared cospan $ U = (\Xm, \Xz, \Xp; \;\;   \Um \c I\Xm \to \Xz \lw I\Xp \cc \Up) $ has two 
cylindrical collared degeneracies $ \E_1(U)$, $ \E_2(U) = \E_1(U).s, $ determined by the following 
collars 

\bigskip
 \xymatrix  @C=15pt @R=24pt
{
~I^2X^-~   \ar[r]^-{E^-IX^-}    \ar[d]_{IU^-}  &  ~IX^-~   \ar[d]^{IU^-}  &
~I^2X^-~   \ar[l]_-{E^+IX^-}  \ar[d]^{IU^-}  
&&
~I^2X^-~   \ar[r]^-{IU^-}    \ar[d]_{E^-IX^-}  &  ~IX^0~   \ar[d]^{E^-X^0}  &
~I^2X^+~   \ar[l]_-{~IU^+}  \ar[d]^{E^-IX^+}  
\\
~IX^0~   \ar[r]^-{E^-X^0}     &  ~IX^0~  &  ~IX^0~   \ar[l]_-{E^+X^0}   
&&
~IX^-~   \ar[r]^-{IU^-}     &  ~IX^0~  &  ~IX^+~   \ar[l]_-{IU^+}   
&   \Bu    \ar[r]^1   \ar[d]^2  &
\\  
~I^2X^+~   \ar[r]_-{E^-IX^+}    \ar[u]^{IU^+}  &  ~IX^+~   \ar[u]_{IU^+}   &
~I^2X^+~   \ar[l]^-{E^+IX^+}   \ar[u]_{~IU^+} 
&&
~I^2X^-~   \ar[r]_-{IU^-}    \ar[u]^{E^+IX^-}   &  ~IX^0~   \ar[u]_{E^+X^0}    &
~I^2X^+~   \ar[l]^-{~IU^+}  \ar[u]_{E^+IX^+}  &
}
\smallskip
    \begin{equation}
  \E_1(U)   \qqq\qq\;\;   \E_2(U) =  \E_1(U).s.  \;\;
    \label{5.7.2}  \end{equation}

	Within manifolds and cobordism, $ \E_1(X) $ is generally used as the degenerate cobordism on the 
manifold $ X $ (cf. \cite{Mo}). But again the cubical relation $ e_1e_1 = e_2e_1 $ is {\it not} satisfied: 
$ \E_1\E_1(X) $ and $ \E_2\E_1(X) $ are different (and isomorphic):

\bigskip
 \xymatrix  @C=17pt @R=24pt
{
~I^2X~   \ar[r]^-{E^-I}    \ar[d]_{IE^-}  &  ~I^2X~   \ar[d]^{IE^-}  &
~I^2X~   \ar[l]_-{E^+I}  \ar[d]^{IE^-}  
&&
~I^2X~   \ar[r]^-{IE^-}    \ar[d]_{E^-I}  &  ~I^2X~   \ar[d]^{E^-I}  &
~I^2X~   \ar[l]_-{~IE^+}  \ar[d]^{E^-I}  
\\
~I^2X~   \ar[r]^-{E^-I}     &  ~I^2X~  &  ~I^2X~   \ar[l]_-{E^+I}   
&&
~I^2X~   \ar[r]^-{IE^-}     &  ~I^2X~  &  ~I^2X~   \ar[l]_-{IE^+}   
&   \Bu    \ar[r]^1   \ar[d]^2  &
\\  
~I^2X~   \ar[r]_-{E^-I}    \ar[u]^{IE^+}  &  ~I^2X~   \ar[u]_{IE^+}   &
~I^2X~   \ar[l]^-{E^+I}   \ar[u]_{~IE^+} 
&&
~I^2X~   \ar[r]_-{IE^-}    \ar[u]^{E^+I}   &  ~I^2X~   \ar[u]_{E^+I}    &
~I^2X~   \ar[l]^-{~IE^+}  \ar[u]_{E^+I}  &
}
\smallskip
    \begin{equation}
  \E_1\E_1(X)   \qqq\qq   \E_2\E_1(X).  \qq
    \label{5.7.3}  \end{equation}

\section{Cobordisms}\label{6}

	We obtain here a {\it quasi} cubical category of $k$-manifolds and cubical cobordisms, based on 
the notion of a differentiable manifold with faces \cite{Do, Ja, La}.

\subsection{Goals and problems}\label{6.1}
	We construct now the {\it quasi} cubical subcategory $ 
\CCOB_*(k) \subset \CCOSP_*(\Top) $ of $k$-manifolds and cubical cobordisms, with cylindrical 
degeneracies and concatenation.

	The 2-cubical truncation $ 2\CCOB_*(k) $ of our construction is related with the construction of Morton and Baez  \cite{Mo, Ba}, which works with assigned collars, cylindrical degeneracies and concatenation by pushout. But notice that, here, $k$ denotes the topological dimension of the objects, while papers dealing with a 2-cubical or 2-globular structure generally refer to the dimension $k+2$ of the highest cobordisms which appear in the structure itself. In the unbounded cubical (or globular) case, there is no upper bound for such dimensions. 

	{\it Likely}, working with assigned collars, one can also construct a {\it weak} cubical category $ 
\CCob_*(k) \subset  \CCc_*(\Top). $ However, the technical aspects of this construction seem to be 
so heavy, that one wonders whether such complication would be justified by the advantage of obtaining 
a less weak structure (satisfying all cubical axioms and having all comparisons invertible).

\subsection{Manifolds with corners}\label{6.2}
We begin by recalling some basic definitions. A {\it differentiable 
manifold with corners} \cite{Do, La} is a second-countable Hausdorff space $ X $ which admits a 
differentiable atlas of charts 
%
    \begin{equation}
	\ph _i \c U_i \to \R^n_+ =  [0, \infty[^n.
    \label{6.2.1}  \end{equation}
	(These charts are homeomorphisms from open subspaces of $ X $ onto open subspaces of the 
euclidean {\it sector} $ \R^n_+$, and the changes of charts $ \ph_i \ph_j^{-1} $ are diffeomorphisms, 
in the obvious sense.)

	Every point $ x \in X $ has a well-defined {\it index} $ c(x) $ between  0  and $ n, $ which counts 
the number of null coordinates of $ \ph_i (x), $ for whichever chart $ \ph_i  $ defined at $ x. $ Thus, $ c(x) 
= 0 $ means that $ X $ is locally euclidean at the point $ x. $ By definition, a {\it connected face} of $ 
X $ is the closure of a connected component of the subset of points of index 1; $ \dd X $ is the union of 
all connected faces. 

	A {\it manifold with faces} \cite{Ja, La} is a manifold with corners where every point $ x $ belongs 
to precisely $ c(x) $ connected faces. Every manifold with boundary is a manifold with faces, where 
the highest possible index is 1. A compact cube is also a manifold with faces: it has six connected 
faces, vertices have index 3, the other edge-points have index 2 and the remaining face-points index 1. 
A {\it face} of a manifold with faces is a union of {\it disjoint} connected faces, and is still a manifold 
with faces; for instance, a compact cube has three non-connected faces.

	Finally, a {\it manifold with n (distinguished) faces} $ X = (X; \dd _1X,..., \dd _nX) $ (\cite{Ja, La}, 
where it is called an {\it $\langle n \rangle$-manifold}) is a manifold with faces equipped with an indexed family of $ n $ faces 
which cover $ \dd X $ and such that $ \dd _iX \cap \dd _jX $ is always a face of $ \dd _i X $ (for $ i \neq  j). $ 
Plainly, $ n $ is at most equal to the number of connected faces of $ X; $ if it is less, the structure we 
are considering is not determined by $ X.$

	A {\it morphism} $ f \c X \to Y $ of such manifolds, with values in $ Y = (Y; \dd _1Y,..., \dd _mY), $ will 
be a {\it continuous} mapping which sends each face of $ X $ into some face of $ Y. $ (The papers 
referred to above consider {\it differentiable} maps; the present choice will simplify the relations with 
topological cospans.)

	The categorical sum is:
    \begin{equation}
	X + Y  =  (X + Y;  \dd _1X,..., \dd _nX, \dd _1Y,..., \dd _mY).
    \label{6.2.2}  \end{equation}

\subsection{Cubical cospans of manifolds}\label{6.3}
	On this basis, it is easy to define the transversally full quasi 
cubical subcategory $ \CCOB_*(k) \subset \CCOSP_*(\Top) $ of $k$-manifolds.

	A {\it cubical cospan} of $k$-manifolds $ u \c \We^n \to \Top $ is a topological cospan
$$
u =  (X(\t), u(i, \t)),
$$
 determined by a manifold with faces $ X = (X; (\dda_i X)), $ as made explicit below; $ 
X $ has dimension $ k+n $ and $ 2n $ distinguished faces, which are pairwise disjoint: $ \ddm_i X \cap 
\ddp_i X = \es. $

	Namely, the central space $ X(0,...0) $ of $ u $ is $ X $ itself, all the other spaces are intersections 
of the assigned faces (and faces as well) and all maps of $ u $ are inclusions
    \begin{equation}
    \begin{array}{ll}
X(\t)  =  \dd^{t_1}_1 X \cap .... \cap \dd^{t_n}_nX  \q   &   (\t = (t_1,..., t_n) \in \We^n),
\\[3pt]
u(i, \t) \c X(\t)  \subset X(\t^{\sharp i} )	   &   (t_i \neq  0),
    \label{6.3.1}\end{array}
    \end{equation}
where we let $ \dd^0_i X = X, $ for all $ i.$
	Thus, the central cospan in each direction $ i $ is given by the inclusion in $ X $ of its two $i$-faces
    \begin{equation}
\ddm_iX \to X \lw \ddp_i X,
    \label{6.3.2}  \end{equation}

	Notice that such a particular {\it topological cospan} $ u \c \We^n \to \Top $ fully determines $ X 
$ and the family $ (\dda_i X) $ of its faces. Moreover, all the squares in $ u $ along two arbitrary 
directions are pullbacks.

	By definition, a {\it transversal map} $ f \c u \to v $ of such cubical cospans is an arbitrary natural 
transformation $ f \c u \to v \c \We^n \to \Top; $ as a consequence, it has an underlying morphism $ f 
\c X \to Y $ of manifolds with distinguished faces (a continuous mapping).

	Plainly $ \CCOB_*(k) $ is closed in $ \CCOSP_*(\Top) $ under faces, (cylindrical) degeneracies, 
transpositions and concatenations. Being transversally full, it automatically contains the comparisons 
for identities, associativity and (cubical) interchanges; recall that the comparisons of identities are {\it lax}.

	Since manifolds with faces have collars (\cite{La}, Lemma 2.1.6), we always have 
$ E_i(u) \te _i u \iso  u. $ But this isomorphism depends on the choice of collars for $ u, $ and is not natural (in the present 
structure). On the other hand, the natural comparison inherited from $ \CCOSP_*(\Top) $
 `collapses cylinders' and is not invertible.

\section{Symmetric quasi cubical categories}\label{7}

	We make precise the definition of a symmetric {\it quasi} cubical category, extending the notion of a 
symmetric {\it weak} cubical category. (The latter, introduced in Part I, is recalled here in Section 1).

\subsection{Symmetric quasi pre-cubical categories}\label{7.1}
A symmetric {\it quasi} pre-cubical category
    \begin{equation}
\AA  =  ((A_n), (\dda_i ), (e_i), (s_i), (+_i)),
    \label{7.1.1}  \end{equation}
is a symmetric {\it quasi} cubical set (\ref{1.1}, \ref{1.2}), equipped with the following additional 
operations.

	For $ 1 \le i \le n, $ the $i$-{\it concatenation} $ x +_i y $ (or $i$-composition) of two $n$-
cubes $ x, y $ is defined when $ x, y $ are {\it $i$-consecutive}, i.e. $ \ddp_i (x) = \ddm_i(y), $ and satisfies 
the following `geometrical' interactions with faces and transpositions
    \begin{equation}
    \begin{array}{ll}
\ddm_i(x +_i y)  =  \ddm_i(x),   &   \ddp_i (x +_i y)  =  \ddp_i (y),
\\[3pt]
\dda_j (x +_i y)  =  \dda_j (x) +_{i-1} \dda_j (y)	& (j < i),
\\[3pt]
 \qq\;\;                =  \dda_j (x) +_i \dda_j (y)   &   (j > i),
    \label{7.1.2}\end{array}
    \end{equation}
    \begin{equation}
    \begin{array}{ll}
s_{i-1}(x +_i y)  =  s_{i-1}(x) +_{i-1} s_{i-1}(y),	\q   &
s_i(x +_i y)  =  s_i(x) +_{i+1} s_i(y),
\\[3pt]
s_j(x +_i y)  =  s_j(x) +_i s_j(y)	   &   (j \neq  i-1, i).
    \label{7.1.3}\end{array}
    \end{equation}

	Again, we are not (yet) assuming categorical or interchange laws for the $i$-compositions. Our 
structure is a symmetric pre-cubical category (as defined in I.3.4) if its degeneracies satisfy the cubical 
relations (\ref{1.1.3}) (or (\ref{1.2.3})) and agree with concatenations:
    \begin{equation}
    \begin{array}{ll}
e_j(x +_i y)  =  e_j(x) +_{i+1} e_j(y)	  &   (j \le i \le n),
\\[3pt]
 \qq\;           =  e_j(x) +_i e_j(y)	 &   (i < j \le n+1).
    \label{7.1.4}\end{array}
    \end{equation}

	The presence of transpositions allows us to reduce condition (\ref{7.1.4}) to:
    \begin{equation}
e_1(x +_1 y)  =  e_1(x) +_2 e_1(y).
    \label{7.1.5}  \end{equation}

\subsection{Introducing transversal maps. }\label{7.2}
As in I.4.1, we introduce now a richer structure, having $n$-dimensional 
maps {\it in a new direction} 0, which can be viewed as strict or `transversal' in opposition 
with the previous weak or `cubical' directions. The comparisons for units, associativity and interchange 
will be maps of this kind.

	Let us start with considering a general {\it category object} $ \AA $ within the category of 
symmetric quasi pre-cubical categories and their functors
    \begin{equation}
    \begin{array}{c}  \xymatrix  @C=30pt @R=20pt
{
~\AA^0~   \ar@<-.8ex>[r]_{e_0}      &
~\AA^1~   \ar[l]  \ar@<-.8ex>[l]_{\dda_0}    &
~\AA^2~   \ar[l]_{c_0}
}
    \label{7.2.1} \end{array} \end{equation}
\smallskip

	We have thus:
	
\Ndt (qcub.1) A symmetric quasi pre-cubical category $ \AA^0 = ((A_n), (\dda_i ), (e_i), (s_i), (+_i)), $ 
whose entries are called {\it n-cubes}, or {\it n-dimensional objects} of $ \AA.$
	
\Ndt (qcub.2) A symmetric quasi pre-cubical category $ \AA^1 = ((M_n), (\dda_i ), (e_i), (s_i), (+_i)), $ 
whose entries are called {\it n-maps} or {\it n-dimensional maps} of $ \AA.$
	
\Ndt (qcub.3) Symmetric cubical functors $ \dda_0 $ and $ e_0, $ called 0-{\it faces} and 0-{\it degeneracy}, 
with $ \dda_0.e_0 = \id .$

	Typically, an $n$-map will be written as $ f \c x \to x', $ where $  \ddm_0 f = x, $
$ \ddp_0 f = x' $ are $n$-cubes. Every $n$-dimensional object $ x $ has an {\it identity} $ e_0(x) \c x \to x. $ Note that $ \dda_0 
$ and $ e_0 $ preserve cubical faces $ (\dda_i , $ with $ i > 0), $ cubical degeneracies $ (e_i), $ 
transpositions $ (s_i) $ and cubical concatenations $ (+_i). $ In particular, given two $i$-consecutive 
$n$-maps $ f, g, $ their 0-faces are also $i$-consecutive and we have:
    \begin{equation}
	f +_i g \c  x +_i y \to  x' +_i y'	\q	(f \c x \to x',  \; g \c y \to y'; \;  \ddp_i f = \ddm_ig).
    \label{7.2.2}  \end{equation}

\Ndt   (qcub.4) A composition law $ c_0 $ which assigns to two 0-consecutive $n$-maps $ f \c x \to x', $
$ h \c x' \to x'' $ an $n$-map $ hf \c x \to x'' $ (also written $ h.f). $ This composition law is (strictly) 
categorical, and forms a category $ \A_n = (A_n, M_n, \dda_0, e_0, c_0). $ It is also consistent with the 
symmetric quasi pre-cubical structure, in the following sense
    \begin{equation}
    \begin{array}{ll}
\dda_i (hf)  =  (\dda_i h).(\dda_i f),   &    e_i(hf)  =  (e_ih)(e_if),
\\[3pt]
s_i(hf)  =  (s_ih)(s_if),   &   (h +_i k).(f +_i g)  =  hf +_i kg,
    \label{7.2.3}\end{array}
    \end{equation}
    \xymatrix  @C=8pt @R=10pt
{
\qqq &  \Bu   \ar[rrrr]^{\ddm_i f}   \ar[dd]_x  &&&&  \Bu   \ar[rrrr]^{\ddm_i h}   \ar[dd]   &&&&  \Bu    \ar[dd]^{x''}
\\ 
&&    \ar[rr]|{\; f \;}   &&&&  \ar[rr]|{\; h \;}   &&
\\
& \Bu   \ar[rrrr]   \ar[dd]_y  &&&&  \Bu   \ar[rrrr]   \ar[dd]   &&&&  \Bu    \ar[dd]^{y''}
& \q &  \Bu \ar[rr]^0  \ar[dd]^i  &&
\\ 
&&    \ar[rr]|{\; g \;}   &&&&  \ar[rr]|{\; k \;}   &&
\\
& \Bu   \ar[rrrr]_{\ddp_i g}    &&&&   \Bu   \ar[rrrr]_{\ddp_i k}   &&&&  \Bu   &&&
}
\smallskip

	The last condition, represented in the diagram above, is the (strict) middle-four interchange between the strict composition $ c_0 $ and 
any weak one. An $n$-map $ f \c x \to x' $ is said to be {\it special} if its $ 2^n $ vertices are identities
    \begin{equation}
\dd^\bal f \c  \dd^\bal x \to \dd^\bal x'	\qq  
\dd^\bal   =   \dd^{\al_1}_1  \dd^{\al_2}_2 ...  \dd^{\al_n}_n   \q  (\al _i = \pm ).
    \label{7.2.4}  \end{equation}
	In degree 0, this just means an identity.

\subsection{Comparisons}\label{7.3}
	Extending I.4.2, we can now define a {\it symmetric quasi cubical category} $ 
\AA $ as a category object within the category of symmetric quasi pre-cubical categories and 
symmetric cubical functors, which is further equipped with special transversal maps, playing the role of 
comparisons for units, symmetry, associativity and cubical interchange, as follows. (We only assign the 
comparisons in direction 1; all the others can be obtained with transpositions. Notice also that the unit 
comparisons are not assumed to be invertible.)

\Ndt   (qcub.5)	For every $n$-cube $ x, $ we have special $n$-maps $ \la_1x $ and $ \rho_1x, $ which are 
natural on $n$-maps and have the following faces (for $ n > 0)$
%
    \begin{equation}
    \begin{array}{llr}
\la_1x\c (e_1\ddm_1 x) +_1 x  \to  x   &&    \mbox{({\it left-unit 1-comparison}),}
\\[3pt]
\dda_1 \la_1x  =  e_0 \dda_1 x,	  &  \dda_j \la_1x  =  \la_1\dda_j x  \;\; &    (1 < j \le  n),
    \label{7.3.1}\end{array}
    \end{equation}

     \xymatrix  @C=27pt @R=25pt  
{  
&&   \Bu  \ar@{-}[r]^{\ddm_1 x}   \ar@{}[d]|{e_0\ddm_1 x}   &
\Bu     \ar@{-}[dd]^{\ddp_j x}     &&
\Bu  \ar@{-}[r]^{\ddm_1 x}   \ar@{-}[dd]  \ar@{}[rdd]|x  &
\Bu     \ar@{-}[dd]^{\ddp_j x}    &&
\\ 
&\Bu  \ar@{-}[r]   \ar@{=}[d]    \ar@{=}[ru]   \ar@{}[rd]|{e_1\ddm_1 x}  &
 \Bu  \ar@{=}[ru]  \ar@{=}[d]    \ar@{}[rd]|{ \la_1\ddp_j x }  &&
\Bu \ar@{=}[d]    \ar@{=}[ru]    \ar@{}[rd]|{ \la_1\ddm_j x }   &&  \q &
\Bu  \ar[r]|>{~~~j}   \ar[d]^>1   \ar[ru]^>0  &
\\
&\Bu  \ar@{-}[r]   \ar@{-}[d]_{\ddm_j x}   \ar@{}[rd]|x   &
\Bu   \ar@{-}[d] &   \Bu   &  
\Bu  \ar@{-}[d]_{\ddm_j x}   &
\Bu   \ar@{-}[r]    \ar@{}[d]|{e_0 \ddp_1}   &   \Bu  &&
\\  
& \Bu  \ar@{-}[r]_{\ddp_1 x}   & \Bu  \ar@{=}[ru]   &&
\Bu  \ar@{-}[r]_{\ddp_1 x}    \ar@{=}[ru]    & \Bu  \ar@{=}[ru] 
}
\smallskip

%
    \begin{equation}
    \begin{array}{llr}
\rho_1x\c  x +_1 (e_1\ddp_1 x)  \to  x,   &&    \mbox{({\it right-unit 1-comparison}),}
\\[3pt]
\dd^\al_1 \rho_1x  =  e_0  \dd^\al_1 x,	&   \dd^\al_j \rho_1x  =  \rho_1\dd^\al_jx   \;\; &    (1 < j \le  n),
    \label{7.3.2}\end{array}
    \end{equation}
\smallskip

     \xymatrix  @C=27pt @R=25pt  
{  
&&   \Bu  \ar@{-}[r]^{\ddm_1 x}    \ar@{}[d]|{e_0\ddm_1 x}  & 
\Bu     \ar@{-}[dd]^{\ddp_j x}  &&
\Bu  \ar@{-}[r]^{\ddm_1 x}   \ar@{-}[dd]  \ar@{}[rdd]|x  &
 \Bu     \ar@{-}[dd]^{\ddp_j x}  &&
\\  
&  \Bu  \ar@{-}[r]   \ar@{-}[d]_{\ddm_j x}    \ar@{=}[ru]  \ar@{}[rd]|x   &
\Bu  \ar@{=}[ru]  \ar@{-}[d]   \ar@{}[rd]|{\rho_1\ddp_j x}  &&
\Bu \ar@{-}[d]_{\ddm_j x}    \ar@{=}[ru]  \ar@{}[rd]|{\rho_1\ddm_j x}  &   
& \q & \Bu  \ar[r]|>{~~~j}   \ar[d]^>1   \ar[ru]^>0  &&
\\
&\Bu  \ar@{-}[r]   \ar@{=}[d]   \ar@{}[rd]|{e_1\ddp_1 x}   &
\Bu   \ar@{=}[d]   &  \Bu &  \Bu  \ar@{=}[d]   &
\Bu   \ar@{-}[r]  \ar@{}[d]|{e_0 \ddp_1}   &   \Bu  &
\\  
& \Bu  \ar@{-}[r]_{\ddp_1 x}   & \Bu  \ar@{=}[ru]  &&
\Bu  \ar@{-}[r]_{\ddp_1 x}    \ar@{=}[ru]    & \Bu  \ar@{=}[ru] 
}
\smallskip

	(Notice that the 0-direction of $ \rho_1x $ is reversed, with respect to I.4.2 - where $ \rho_1x $ is 
invertible and its direction is inessential.)
	
\Ndt (qcub.6)  For every $n$-cube $ x, $ we have an invertible special $(n+2)$-map $ \si _1x, $ which is 
natural on $n$-maps and has the following faces (for $ n > 0)$
%
    \begin{equation}
    \begin{array}{lr}
\si _1x \c e_1e_1(x) \to e_2e_1(x)    &    \mbox{({\it symmetry 1-comparison}),}
\\[3pt]
 \dda_1 \si _1(x)  =   \dda_2 \si_1 (x)  =  \id (e_1x),	\qq   &
\dda_{j+2}  \si _1(x)  =  \si _1(\dda_j x).
    \label{7.3.3}\end{array}
    \end{equation}

\Ndt    (qcub.7)	For three 1-consecutive $n$-cubes $ x, y, z, $ 
we have an {\it invertible} special $n$-map $ \ka_1(x, y, z), $ 
which is natural on $n$-maps and has the following faces
%
    \begin{equation}
    \begin{array}{lr}
\ka_1(x, y, z) \c  x +_1 (y +_1 z) \to  (x +_1 y) +_1 z    \; &    \mbox{({\it associativity 1-comparison}),}
\\[3pt]
\ddm_1\ka_1(x, y, z)  =  e_0\ddm_1x,	&   \ddp_1\ka_1(x, y, z)  =  e_0\ddp_1z,
\\[3pt]
\dda_j \ka_1(x, y, z)  =  \ka_1(\dda_j x, \dda_j y, \dda_j z)   &   (1 < j \le n),
    \label{7.3.4}\end{array}
    \end{equation}
\smallskip

     \xymatrix  @C=27pt @R=20pt  
{  
&&   \Bu  \ar@{-}[r]^{\ddm_1 x}      \ar@{}[d]|{e_0\ddm_1 x}   & 
\Bu     \ar@{-}[d]^{\ddp_j x}   &&
\Bu  \ar@{-}[r]^{\ddm_1 x}   \ar@{-}[d]    \ar@{}[rdd]|{x +_1 y}  &
\Bu     \ar@{-}[d]^{\ddp_j x}  
\\       
& \Bu  \ar@{-}[r]   \ar@{-}[dd]_{\ddm_j x}    \ar@{=}[ru]     \ar@{}[rdd]|x  & 
\Bu  \ar@{=}[ru]   \ar@{-}[dd]   \ar@{}[rddd]|{ \ka_1\ddp_j}  &
\Bu \ar@{-}[d]^{\ddp_j y}   &
\Bu \ar@{-}[dd]    \ar@{=}[ru]   \ar@{}[rddd]|{ \ka_1\ddm_j}  &
\Bu    \ar@{-}[d]   & \Bu \ar@{-}[d]^{\ddp_j y}  &&&
\\
&&&   \Bu  \ar@{-}[dd]^{\ddp_j z}   &&
\Bu   \ar@{-}[r]  \ar@{-}[dd]     \ar@{}[rdd]|z   &   
\Bu   \ar@{-}[dd]^{\ddp_j z}
& \Bu  \ar[r]|>{~~~j}   \ar[d]^>1   \ar[ru]^>0  &
\\
& \Bu  \ar@{-}[r]   \ar@{-}[d]_{\ddm_j y}    \ar@{}[rdd]|{ y +_1 z}   & 
\Bu    \ar@{-}[d]   &&   \Bu \ar@{-}[d]   &&&&&
\\
& \Bu    \ar@{-}[d]_{\ddm_j z}  & \Bu \ar@{-}[d]  & \Bu  & 
\Bu   \ar@{-}[d]   &
\Bu   \ar@{-}[r]  \ar@{}[d]|{e_0 \ddp_1}   &   \Bu  
\\  
& \Bu  \ar@{-}[r]_{\ddp_1 x}   & \Bu  \ar@{=}[ru]   &&   
\Bu  \ar@{-}[r]_{\ddp_1 x}    \ar@{=}[ru]    & \Bu  \ar@{=}[ru] 
}
\smallskip

\Ndt (wcub.8)	Given four $n$-cubes $ x, y, z, u $ which make the concatenations below well-formed, we have 
an {\it invertible} special $n$-map $ \chi _1(x, y, z, u)$,  which is natural on $n$-maps and has the 
following faces (partially displayed below)

$$
\chi_1(x, y, z, u)\c (x +_1 y) +_2 (z +_1 u)  \to  (x +_2 z) +_1 (y +_2 u)
\;   \mbox{({\it interchange 1-comparison}),}
$$
%
    \begin{equation}
    \begin{array}{c}
\ddm_1 \chi_1(x, y, z, u)  =  e_0(\ddm_1 x +_2 \ddm_1 z),  \;\;
\ddp_1 \chi_1(x, y, z, u)  =  e_0(\ddp_1 y +_2 \ddp_1 u),
\\[3pt]
\ddm_2 \chi_1(x, y, z, u)  =  e_0(\ddm_2 x +_1 \ddm_2 y),  \;\;
\ddm_2 \chi_1(x, y, z, u)  =  e_0(\ddm_2 x +_1 \ddm_2 y),
\\[3pt]
\dd^\al_j\chi_1(x, y, z, u)  =  \chi_1(\dd^\al_jx, \dd^\al_jy, \dd^\al_jz, \dd^\al_ju)  \qqq
(2 < j \le  n),
    \label{7.3.5}\end{array}
    \end{equation}

\smallskip
     \xymatrix  @C=11pt @R=20pt  
{  
&&   \Bu  \ar@{-}[r]^{\ddm_2 x}   \ar@{}[d]|{e_0}    &
\Bu   \ar@{-}[r]^{\ddm_2 y}   &  \Bu   \ar@{-}[d]^{\ddp_1 y}  &&&&
\Bu  \ar@{-}[r]^{\ddm_2 x}   \ar@{-}[d]   \ar@{}[rd]|x  &
\Bu  \ar@{-}[r]^{\ddm_2 y}  \ar@{-}[d]   \ar@{}[rd]|y  &
\Bu   \ar@{-}[d]^{\ddp_1 y}  &&&
\\  
\Bu  \ar@{-}[r]   \ar@{-}[d]_{\ddm_1 x}    \ar@{=}[rru]    \ar@{}[rrd]|{x +_1 y}  &
\Bu  \ar@{-}[r]  & \Bu  \ar@{=}[rru]  \ar@{-}[d]   \ar@{}[rrd]|{e_0}  &&  
\Bu \ar@{-}[d]^{\ddp_1 u}  &&
\Bu \ar@{-}[d]_{\ddm_1 x}    \ar@{=}[rru]    \ar@{}[rrd]|{e_0}    &&
\Bu \ar@{-}[d]     \ar@{}[r]|{+_2}    \ar@{}[rd]|z  &
\Bu \ar@{-}[d]       \ar@{}[r]|{+_2}    \ar@{}[rd]|u   & 
\Bu \ar@{-}[d]^{\ddp_1 u}  && 
\Bu   \ar[r]|>{~~~1}   \ar[d]^>2   \ar[ru]^>0  &
\\
\Bu  \ar@{-}[r]   \ar@{-}[d]_{\ddm_1 z}     \ar@{}[rrd]|{z +_1 u}  &
\Bu  \ar@{-}[r]  & \Bu  \ar@{-}[d]  &&  \Bu  &&
\Bu  \ar@{-}[d]_{\ddm_1 z}   &&  \Bu   \ar@{-}[r]    \ar@{}[d]|{e_0}  &
\Bu   \ar@{-}[r] &   \Bu  &&
\\  
\Bu  \ar@{-}[r]_{\ddp_2z}   &  \Bu  \ar@{-}[r]_{\ddp_2 u}   &
\Bu  \ar@{=}[rru]   &&&&
\Bu  \ar@{-}[r]_{\ddp_2 z}    \ar@{=}[rru] &
\Bu  \ar@{-}[r]_{\ddp_2 u}   &  \Bu \ar@{=}[rru] &&
}
\bigskip

	Moreover, the nullary interchange $ e_1(x) +_2 e_1(y) = e_1(x +_1 y) $ (of the {\it weak} cubical 
case (\ref{7.1.5})) is replaced with an {\it invertible} special $(n+1)$-map $ \io_1(x, y), $ which is defined 
when $ x, y $ are 1-consecutive, is natural on $n$-maps and has the following faces (partially 
displayed below)
    \begin{equation}
    \begin{array}{lr}
\io_1(x, y) \c  e_1(x) +_2 e_1(y)  \to  e_1(x +_1 y)   &   
 \mbox{({\it nullary interchange}),}
\\[3pt]
\ddm_1 \io_1(x, y)  =  \ddp_1 \io_1(x, y)  =  e_0(x +_1 y),  
\\[3pt]
\ddm_{j+1} \; \io_1(x, y)  =  e_0(e_1\ddm_j x),  &
\ddp_{j+1} \; \io_1(x, y)  =  e_0(e_1\ddp_j y),
    \label{7.3.6}\end{array}
    \end{equation}
     \xymatrix  @C=24pt @R=20pt  
{  
&&   \Bu  \ar@{-}[r]^{ e_1\ddm_jx }   \ar@{}[d]|{e_0}   &
\Bu     \ar@{-}[dd]^{x +_1 y}   &&&
\Bu  \ar@{-}[r]^{e_1\ddm_j x}   \ar@{-}[dd]   \ar@{}[rdd]|{e_1(x +_1 y)}  &
\Bu     \ar@{-}[dd]^{x +_1 y}  &&&
\\  
&  \Bu  \ar@{-}[r]   \ar@{-}[d]_x    \ar@{=}[ru]   \ar@{}[rd]|{e_1(x)}  &
\Bu  \ar@{=}[ru]  \ar@{-}[d]   \ar@{}[rd]|{e_0}  &&&  
\Bu \ar@{-}[d]_x    \ar@{=}[ru]  \ar@{}[rd]|{e_0}   &&&
\Bu  \ar[r]|>{~~~1}   \ar[d]^>{j+1}   \ar[ru]^>0  &
\\
& \Bu  \ar@{-}[r]   \ar@{-}[d]_y   \ar@{}[rd]|{e_1(y)}  & \Bu   \ar@{-}[d] &   
\Bu &&  \Bu  \ar@{-}[d]_y   & \Bu   \ar@{-}[r] \ar@{}[d]|{e_0}  &   \Bu  &&
\\  
& \Bu  \ar@{-}[r]_{e_1\ddp_j y}   & \Bu  \ar@{=}[ru]   &&&   
\Bu  \ar@{-}[r]_{e_1\ddp_j y}    \ar@{=}[ru]    & \Bu  \ar@{=}[ru] 
}
\smallskip

\Ndt (qcub.9)	Finally, these comparisons must satisfy some conditions of coherence (see \ref{7.4}).

\smallskip

	We say that $ \AA $ is unitary if the comparisons $ \la , \rho  $ are identities.

\subsection{Coherence}\label{7.4}
   Extending I.4.3, the coherence axiom (qcub.9) means that the following diagrams of 
transversal maps commute (assuming that all the cubical compositions make sense):
   
\bigskip
\ndt   (i) {\it coherence pentagon for} $ \ka  = \ka_1$  (writing $ + = +_1$):

%
    \begin{equation}
    \begin{array}{c}  \xymatrix  @C=5pt @R=2pt  
{  
&\q&  ~x + (y + (z + u))~    \ar[lld]_{1 + \ka}   \ar[rdd]^\ka 
\\
~x + ((y + z) + u)~   \ar[dd]_\ka 
\\
&&&  ~(x + y)  +  (z + u)~   \ar[ldd]^\ka 
\\
~(x + (y + z)) + u~   \ar[rrd]_{\ka + 1} 
\\
&&   ~((x + y) + z) + u~
}
    \label{7.4.1} \end{array} \end{equation}

\bigskip
\ndt (ii) {\it coherence hexagon for $ \chi  = \chi_1$ and  $ \ka  = \ka_1$} (always writing $ + = +_1$):

%
    \begin{equation}
    \begin{array}{c}  \xymatrix  @C=7pt @R=20pt  
{  
  (x + (y + z)) +_2 (x' + (y' + z'))~    \ar[r]^-{\ka \, +_2 \, \ka}  \ar[d]_{\chi}  
&   ~((x + y) + z) +_2 ((x' + y') + z')  \ar[d]^{\chi}
\\
(x +_2 x') + ((y + z) +_2 (y' + z'))~     \ar[d]_{1+\chi}  
&   ~((x + y) +_2 (x' + y')) + (z +_2 z')  \ar[d]^{\chi+1} 
\\
(x +_2 x') + ((y +_2 y') + (z +_2 z'))~     \ar[r]_\ka    
&   ~((x +_2 x') + (y +_2 y')) + (z +_2 z')
}
    \label{7.4.2} \end{array} \end{equation}

\bigskip
\ndt  (iii) {\it coherence conditions of the interchanges $ \chi  = \chi _1$,  $\io = \io_1 $ with the unit comparisons $ \la = \la_1 $ and $ \rho = \rho_1$}  (writing $ + = +_1$,  $ e = e_1$ and $\dda = \dda_1$): 

%
    \begin{equation}
    \begin{array}{c}  \xymatrix  @C=8pt @R=25pt  
{  
  ~(e\ddm x +Êx) +_2 (e\ddm  y +Êy)~    \ar[rd]^-{\la \, +_2\,  \la}  \ar[d]_{\chi}  &&
~(x + e\ddp x) +_2 (y + e\ddp y)~    \ar[ld]_-{\rho \, +_2\,  \rho}  \ar[d]^{\chi}
\\
~(e\ddm x +_2Êe\ddm y) + (x  +_2Êy)~     \ar[d]_{\io + 1}  &
~x +_2 y~   &
~(x +_2 y) + (e\ddp x +_2  e\ddp y)~  \ar[d]^{1 + \io}
\\
~(e(\ddm x +Ê\ddm y) + (x  +_2Êy)~     \ar[ru]_-{\la}  
&&
~(x +_2 y) + e(\ddp x +_2  \ddp y)~  \ar[lu]^-\rho
}
    \label{7.4.3} \end{array} \end{equation}
\bigskip

\subsection{Comments}\label{7.5}
	The symmetric {\it quasi} cubical category $ \A $ is a symmetric {\it weak} cubical 
category if the unit-comparisons $ \la, \rho $ are invertible, the symmetric comparison $ \si_1 $ is an 
identity and a further coherence axiom holds:
   
\bigskip
\ndt (iv) {\it coherence triangle for} $ \la_1, \rho_1, \ka_1:$
%
    \begin{equation}
    \begin{array}{c}  \xymatrix  @C=10pt @R=15pt  
{  
~ x +_1 (e_1\ddm_1 y +_1 y)~    \ar[rr]^{\ka}  \ar[rd]_{1 +\la}  
&&   ~(x +_1 e_1\ddp_1 x) +_1 y~     \ar[ld]^{\rho + 1}
\\
&   ~ x +_1 y ~ 
}
    \label{7.5.1} \end{array} \end{equation}

	Notice that the latter does not hold in $ \CCOSP_*(\Top) $ nor in $ \CCOB_*(k), $ where the 
maps $ 1+\la $ and $ (\rho+1)\ka $ collapse different cylinders. In Part I, also the nullary interchange 
$ \io_1 $ was taken to be an identity - which works in the situations studied there; the present more 
general definition seems to be preferable, from a formal point of view.

\end{document}